\documentclass[11pt]{article}

\usepackage[letterpaper,margin=1in]{geometry}

\usepackage{bm}
\usepackage{amsmath,amssymb,amsthm,mathtools}
\usepackage{xparse}

\usepackage{algorithm}
\usepackage{algpseudocode}
\usepackage{tikz}
\usetikzlibrary{arrows.meta,positioning,decorations.pathreplacing}

\usepackage{enumitem}
\usepackage[most]{tcolorbox}
\usepackage{authblk}

\usepackage[round,authoryear]{natbib}
\usepackage[colorlinks=true,linkcolor=blue,citecolor=blue,urlcolor=blue]{hyperref}
\usepackage[nameinlink,capitalize]{cleveref}

\allowdisplaybreaks

\newtheorem{theorem}{Theorem}
\newtheorem{lemma}{Lemma}

\theoremstyle{definition}

\theoremstyle{remark}

\newtcblisting{promptbox}[1][]{
breakable,
enhanced,
listing only,
listing options={
basicstyle=\normalfont,
breaklines=true,
breakautoindent=false,
breakindent=0pt,
postbreak={},
columns=fullflexible,
keepspaces=true,
aboveskip=0pt,
belowskip=0pt,
literate={-}{{-\allowbreak}}1
},
colback=white,
colframe=gray!35,
boxrule=0.4pt,
arc=2pt,
left=6pt,
right=6pt,
top=6pt,
bottom=6pt,
title={Prompt},
fonttitle=\bfseries,
coltitle=black,
colbacktitle=gray!5,
titlerule=0.4pt,
lefttitle=6pt,
righttitle=6pt,
toptitle=4pt,
bottomtitle=4pt,
#1
}

\newtcolorbox{chatgptbox}[1][]{
breakable,
enhanced,
colback=gray!8,
colframe=gray!35,
boxrule=0.4pt,
arc=2pt,
left=6pt,
right=6pt,
top=6pt,
bottom=6pt,
title={ChatGPT-generated response},
fonttitle=\bfseries,
coltitle=black,
colbacktitle=gray!15,
titlerule=0.4pt,
lefttitle=6pt,
righttitle=6pt,
toptitle=4pt,
bottomtitle=4pt,
fontupper=\normalfont,
before upper={
\setlength{\parindent}{0pt}
\setlength{\parskip}{0.75\baselineskip}
},
#1
}

\renewcommand{\P}[1]{\mathbb{P}\left(#1\right)}
\NewDocumentCommand{\E}{o m}{%
  \ensuremath
  {\mathbb{E}%
  \IfValueT{#1}{_{#1}}%
  \!\left[#2\right]}
}

\newcommand{\given}{\;\middle|\;}

\newcommand{\Zp}{\mathbb Z_+}
\newcommand{\ql}{q_{\textup{L}}}
\newcommand{\qh}{q_{\textup{H}}}
\newcommand{\xl}{x_{\textup{L}}}
\newcommand{\xh}{x_{\textup{H}}}
\newcommand{\K}{\mathcal K}
\newcommand{\Qtrue}{\mathcal Q}
\newcommand{\Qop}{\widetilde{\mathcal Q}}
\newcommand{\Pop}{\mathcal P}
\newcommand{\Aop}{\mathcal A}
\newcommand{\DFop}{\mathcal{D}_{\textup{F}}}
\newcommand{\DSop}{\mathcal{D}_{\textup{S}}}
\newcommand{\Bellman}{\mathcal{T}}
\newcommand{\Bellmanmodified}{\widetilde{\mathcal{T}}}

\newcommand{\1}{\mathbf 1}
\newcommand{\piopt}{\pi^*}
\newcommand{\piavgopt}{\overline{\pi}^*}
\newcommand{\Val}{V_{\alpha}^{(\infty)}}
\newcommand{\Dzo}{D_{01}^*}
\newcommand{\Dtw}{D_{12}^*}
\newcommand{\tpi}{\widetilde{\pi}}
\newcommand{\tC}{\widetilde{C}}

\newif\ifarxiv
\arxivtrue
\ifarxiv

\ifarxiv
\newenvironment{myproof}[1][Proof]{%
\begin{proof}[#1]%
}{%
\end{proof}%
}
\else
\newenvironment{myproof}[1][Proof.]{%
\begin{proof}{#1}%
}{%
\end{proof}%
}
\fi

\title{Optimality of a Threshold Policy for a Queueing System with One Fast Server and Two Identical Slow Servers}

\author[1]{Weina Wang}
\author[2]{Taha Ameen}
\author[3]{Yudong Chen}
\author[1]{Yige Hong}
\author[1]{Josh Nichols}
\author[3]{Matthew Zurek}

\affil[1]{Computer Science Department, Carnegie Mellon University\newline
\texttt{weinaw@cs.cmu.edu}, \texttt{yigeh@andrew.cmu.edu},
\texttt{joshuani@andrew.cmu.edu}}
\affil[2]{Department of Electrical and Computer Engineering \& Coordinated Science Lab,
University of Illinois\newline
\texttt{tahaa3@illinois.edu}}
\affil[3]{Department of Computer Sciences, University of Wisconsin-Madison\newline
\texttt{yudongchen@cs.wisc.edu},
\texttt{matthew.zurek@wisc.edu}
}
\date{}

\begin{document}

\maketitle

\begin{abstract}
This paper studies the optimal control problem of a queueing system with three servers: one fast server and two identical slow servers.
The two-server version of this problem, with one fast server and one slow server, was introduced by \citet{LinKum_84}, and the optimal policy has been shown to be of threshold type.
However, generalizing this result beyond the two-server setting has been considered an open problem.
In this paper, we resolve the first nontrivial case by proving that a threshold policy is optimal for the three-server system considered.

The core technical ideas in this paper are generated by GPT-5.5 Pro.
\ifarxiv
We have included a short report describing the authors' interactions with GPT-5.5 Pro.
\fi
The authors verified the proofs and rewrote the paper for better rigor, clarity, and exposition.
In addition, three key lemmas have also been verified in Lean 4.
\end{abstract}

\section{Introduction}\label{sec:intro}

We consider an optimal control problem for a single-queue system with one fast server and two identical slow servers, as illustrated in Figure~\ref{fig:model}.
Jobs arrive according to a Poisson process with rate~$\lambda$.
Server~1 is the fast server and has exponentially distributed service times with rate~$\mu_1$;
Servers~2 and 3 are the identical slow servers, and they have exponentially distributed service times with rate $\mu_2$, where
\begin{equation}
    0<\mu_2<\mu_1.
\end{equation}
The control policy decides when to assign queued jobs to idle servers, and the decisions are made right after a job arrival or a job departure.
Job assignments are nonpreemptive, i.e., once a job is assigned to a server, it remains there until service completion.
The goal of the optimal control is to minimize the cost, where the cost is the total number of jobs in the system.
We study both the infinite-horizon discounted-cost criterion and the infinite-horizon average-cost criterion under the stability condition $\lambda<\mu_1+2\mu_2$.
\ifarxiv
\begin{figure}
\centering
\includegraphics[width=0.4\linewidth]{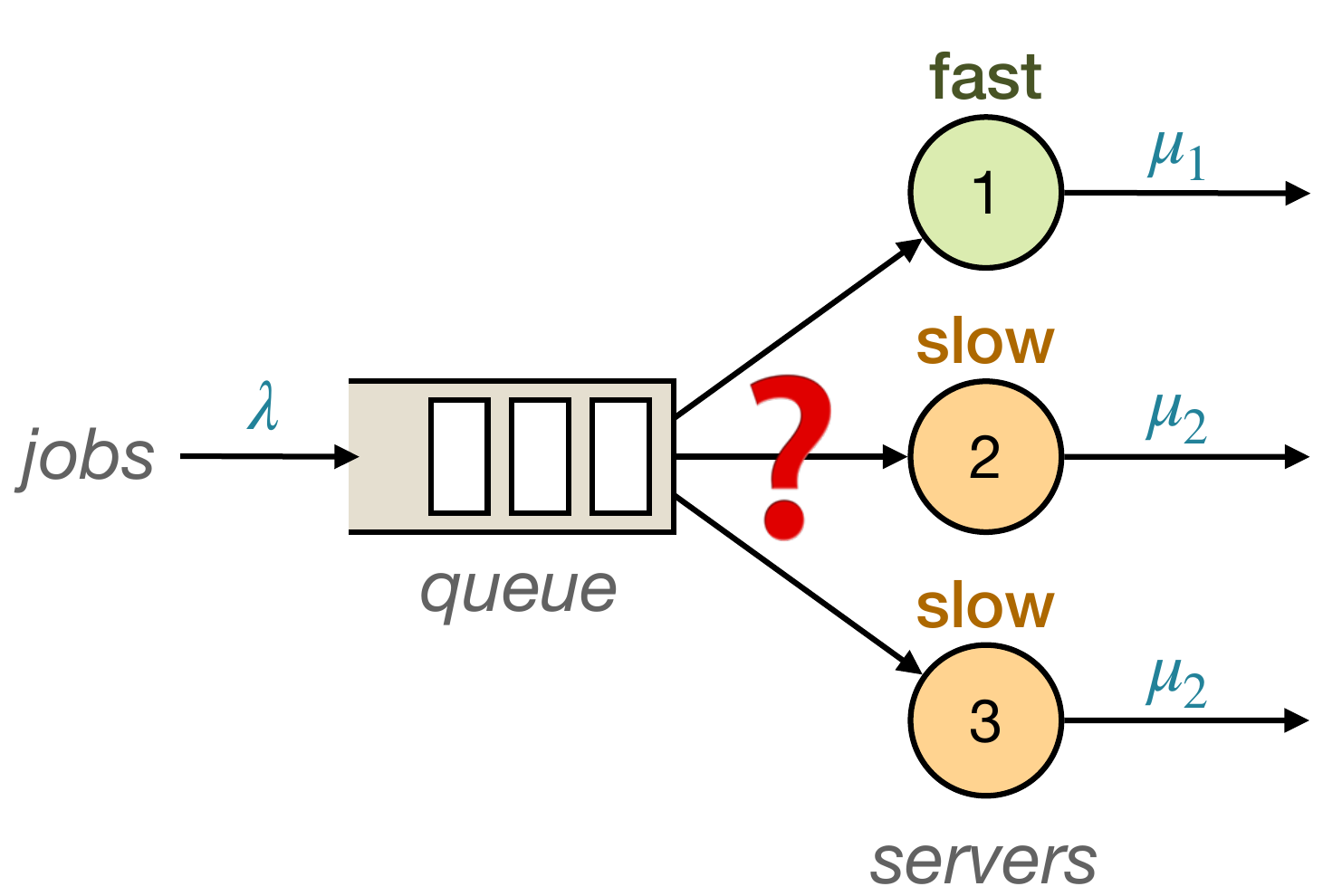}
\caption{A queueing system with one fast server and two identical slow servers.
We assume $0<\mu_2<\mu_1$, so Server 1 is the fast server and Servers 2 and 3 are the slow servers.}
\label{fig:model}
\end{figure}
\else
\begin{figure}
\FIGURE
{\includegraphics[width=0.4\linewidth]{stsy/figures/model.pdf}}
{A queueing system with one fast server and two identical slow servers. 
\label{fig:model}}
{We assume $0<\mu_2<\mu_1$, so Server 1 is the fast server and Servers 2 and 3 are the slow servers.}
\end{figure}
\fi

The two-server version of this problem, with one fast server and one slow server, was introduced by \citet{LinKum_84}.
They proved that there exists an optimal policy that is of threshold type: the fast server is kept busy whenever possible, and the slow server is used only when the queue length exceeds a threshold.
The proof is based on policy iteration.
This problem has since been revisited by different methods, including the probabilistic coupling arguments of \citet{Wal_84}, and the value-iteration arguments of \citet{Koo_95}, and variants of the problem have also been studied in asymptotic regimes (see, e.g., \citealt{TehWar_02,Arm_05}).
It has been conjectured that a similar threshold structure should hold for systems with more than two servers.
However, as explained by \citet{Koo_22} in the special issue ``100 Views on Queues'' celebrating Volume 100 of \emph{Queueing Systems}, this generalization is delicate; some prior attempts were later found to be incomplete \citep{deVZho_06}.
Therefore, this problem with more than two servers is still considered open.
Koole concludes his note with the challenge: ``This problem is too fundamental to be left open'' \citep{Koo_22}.

This paper proves the threshold conjecture for the first nontrivial open case beyond the two-server setting: one fast server and two identical slow servers.
Three key lemmas in this paper have been verified in Lean 4 \citep{MouUll_21},\footnote{\url{https://lean-lang.org/}}
with code available at
\ifarxiv
the GitHub repository \citet{GitLeanOneFastTwoSlow_26}.
\else
an anonymous GitHub repository \citet{GitLeanOneFastTwoSlow_26_anonymous}.
\fi

\subsection*{Disclaimer}
The core technical ideas in this paper are generated by GPT-5.5 Pro.
The authors verified the proofs and rewrote the paper for better rigor, clarity, and exposition.
Thus, while the paper is written in the usual first-person plural style, the reader should be aware that the initial discovery of the main proof ideas was by AI.
\ifarxiv
We have included a short report describing the authors' interactions with GPT-5.5 Pro in Appendix~\ref{sec:report-gpt}.
\fi

\section{Result and Approach}\label{sec:result}

\begin{theorem}\label{thm:main}
For the considered optimal control problem for a queueing system with one fast server and two identical slow servers, there is an optimal stationary policy under the discounted-cost criterion with the following threshold structure.
Let $q$ denote the number of queued jobs at the decision time.
It suffices to consider the case $q\ge 1$.
\begin{itemize}
\item \textbf{Fast Server:} When the fast server is idle, it is assigned one queued job.
\item \textbf{Slow Servers:} There exist thresholds $\ql$ and $\qh$, with $0\le \ql\le \qh\le \infty$, such that the job assignment decisions for slow servers are as follows.
\begin{itemize}
    \item When both slow servers are idle, the number of jobs assigned to them is
    \begin{equation}
        \begin{cases}
        0, & \text{if } q<\ql,\\
        1, & \text{if } \ql\le q<\qh,\\
        \min\{2,q\}, & \text{if } q\ge \qh.
        \end{cases}
    \end{equation}
    \item When one slow server is idle, one job is assigned to it if $q\ge \qh-1$.
\end{itemize} 
\end{itemize}

If $\lambda<\mu_1+2\mu_2$, there is an average-cost optimal policy with the same structure.
\end{theorem}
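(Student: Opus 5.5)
We would prove the statement by value iteration on a uniformized, discounted MDP, showing that a set of structural properties of the value function is preserved by the Bellman operator $\Bellman$. After uniformizing with rate $\lambda+\mu_1+2\mu_2$, the state can be written as $(q,x_1,k)$. Here $q\ge 0$ is the number of queued jobs, $x_1\in\{0,1\}$ says whether the fast server is busy, and $k\in\{0,1,2\}$ is the number of busy slow servers. By symmetry of the slow servers only $k$ matters. Assignment is instantaneous, so it is convenient to work with the post-decision value $V(q,x_1,k)$. The Bellman operator then minimizes $V(q-a_1-a_2,\,x_1+a_1,\,k+a_2)$ over feasible assignments $(a_1,a_2)$, and applies the transition operator for arrivals and completions. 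Since the cost $q+x_1+k$ is linear, it is natural to express everything through the total number in system $n=q+x_1+k$ and the configuration $(x_1,k)$.

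\textbf{Step 1 (invariant properties).} We identify a finite list of inequalities on $V$ that are preserved by $\Bellman$. The list we have in mind is the following.
\begin{enumerate}[label=(\roman*)]
\item \emph{Fast-server priority.} Moving a job from the queue into the fast server does not increase cost: $V(q-1,1,k)\le V(q,0,k)$. Likewise, a busy fast server is at least as good as a busy slow server: $V(q,1,k-1)\le V(q,0,k)$.
\item \emph{Monotonicity of the slow-server differences in $q$.} Let $D_{01}(q)=V(q,1,1)-V(q+1,1,0)$ and $D_{12}(q)=V(q,1,2)-V(q+1,1,1)$ be the gains from starting one more slow job. We require both to be nonincreasing in $q$; this is the discrete convexity that yields thresholds.
\item \emph{Ordering of the two thresholds.} We require $D_{12}(q)\le D_{01}(q+1)$ together with a diagonal-type inequality. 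These ensure that when one slow server is already busy, the second is started no later than the queue-length shift in the statement. That shift gives $\ql\le\qh$ and the rule ``assign if $q\ge\qh-1$''.
\end{enumerate}

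\textbf{Step 2 (propagation).} We verify that each property is preserved by each component of $\Bellman$: the minimization step, the arrival operator, the fast-completion operator, and the slow-completion operator with its factor $k\mu_2$. This is done by case analysis on which minimizers are active at the states involved, using coupling-style interchange arguments in the style of \citet{Koo_95}. The minimization step should preserve the properties once (i)–(iii) show that the minimizer has threshold form. The arrival operator is a shift in $q$, so it is harmless. The completion operators are the delicate part.

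\textbf{Step 3 (conclude).} Starting value iteration from $V_0=0$, all iterates satisfy the properties. The iterates converge to the discounted value function $\Val$, so $\Val$ satisfies them too, and a stationary optimal policy reads off the thresholds as $\ql=\min\{q: D_{01}(q)\le 0\}$ and $\qh=\min\{q: D_{12}(q-1)\le 0\}$, with $\infty$ allowed. For the average-cost claim, under $\lambda<\mu_1+2\mu_2$ we take a vanishing-discount limit. Verifying Sennott-type conditions requires some stable policy with finite average cost, for instance always using all servers. The set of threshold policies with thresholds in $\overline{\mathbb Z}_+$ is compact, so a subsequence of discount-optimal threshold pairs converges to an average-optimal policy of the same form.

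\textbf{Main obstacle.} The hard step is preserving the second-difference properties (ii)–(iii) under the slow-completion operator. Its rate $k\mu_2$ depends on the configuration, so the differences mix states with different numbers of busy slow servers. This is exactly where earlier multi-server attempts broke down. My first attempt would be to strengthen the induction hypothesis with cross-configuration inequalities, for example $D_{12}(q)\le D_{01}(q)$ and analogous comparisons involving $x_1=0$, until the list closes. Each resulting case inequality would then be checked as a finite linear combination. If the list does not close on the first pass, I would add whichever missing inequality the failing case requires and iterate.
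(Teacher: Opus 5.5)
Taking your formulas at face value, the inequality you single out for ordering the thresholds has the wrong sign. The rest of the invariant list is left open, so the proposal does not yet contain the idea that makes the argument work. Your (iii), $D_{12}(q)\le D_{01}(q+1)$, reads
\[
V(q,1,2)+V(q+2,1,0)\le 2V(q+1,1,1).
\]
This is concavity in the number of busy slow servers along a line of fixed total population. It says that whenever starting a first slow job is weakly beneficial, so is starting a second one at the same total. With your own threshold definitions it therefore gives $\qh\le\ql$ (whenever $\ql\ge1$), the opposite of what you claim.

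What is needed, and what the paper propagates as (K6), is the reverse inequality $V(q+2,1,0)+V(q,1,2)\ge 2V(q+1,1,1)$. It gives $D_{12}(q-1)\le 0\Rightarrow D_{01}(q)\le 0$, and hence $\ql\le\qh$. Beyond the sign, ``a diagonal-type inequality'' and ``add whichever missing inequality the failing case requires and iterate'' leave unspecified exactly what was open: a finite list that is actually closed. In the paper that list is:
\begin{itemize}
\item monotonicity in $x$;
\item the two multimodularity inequalities (your (ii) is essentially the first);
\item monotonicity in $y$;
\item convexity in $y$ at fixed $x$;
\item (K6).
\end{itemize}
The cross-configuration inequality you float, $D_{12}(q)\le D_{01}(q)$, is not among these and is not implied by them: $f(x,y)=y^2$ satisfies all six but violates it.

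You have also put the difficulty in the wrong place. With that cone, the event operators are routine. The slow-completion operator maps the (K6) expression to twice the first multimodularity expression, and the fixed-$x$ convexity expression to zero. The arrival operator is a shift. The fast-completion boundary terms at $x=0$ reduce to monotonicity in $x$ and convexity in $y$.

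The delicate step is the minimization. There, (K6) is what makes the two decisions nested. If starting one slow job from a both-idle state is not worthwhile, then starting a second one after it is not worthwhile either. Otherwise, the optimal total from the both-idle state coincides with the optimal total from the one-busy state reached by the first assignment. This is what makes each post-minimization difference equal to the old difference truncated at zero. The remaining inequalities then need a case analysis over minimizer patterns, with fifteen patterns for the second multimodularity inequality alone. That analysis uses monotonicity in $y$, convexity in $y$ at fixed $x$, and the second multimodularity inequality, none of which is in your list.

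Working on $(q,x_1,k)$ with the true assignment constraint, you must also handle the boundary where the fast server empties. The paper notes that the cone is not preserved by the true assignment operator there: for $f(x,y)=x$, convexity in $x$ fails at $x=0,1,2$. It avoids this in three steps:
\begin{itemize}
\item it proves fast-server priority separately by coupling;
\item it iterates with a relaxed operator that may move the fast-server job to a slow server;
\item it shows that after one iteration the relaxation is never used.
\end{itemize}
Carrying (i) as an invariant under the true operator would have to confront this boundary explicitly. Your vanishing-discount/compactness argument for the average-cost part is a reasonable alternative to the paper's route through the relative value function. However, it can only transfer the discounted structure once that structure has actually been established.
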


\paragraph{Remark.}
We can understand the two thresholds $\ql$ and $\qh$ in the following way.
When both slow servers are idle, the job assignment decision can be thought of as consisting of two sequential decisions.
We first decide whether to raise the number of jobs at slow servers from $0$ to $1$, which is determined based on the lower threshold~$\ql$.
Conditional on raising to~$1$, the queue length~$q$ has decreased by~$1$ and become $q-1$, and the decision whether to further raise the number of jobs at slow servers to~$2$ is the same decision as when one slow server is initially idle.
Therefore, the threshold appears as $\qh$ in the two-initially-idle case and as $\qh-1$ in the one-initially-idle case.

\paragraph{Proof Approach.}
The proof in this paper is based on value iteration, a common approach in the literature (see, e.g., \citealt{Haj_84,Koo_95}).
We propagate a set of six inequalities through value iteration.
These inequalities characterize monotonicity, convexity, and supermodularity properties of the value function.
The main novelty, relative to the proof for the two-server problem using value iteration \citep{Koo_95}, is a convexity property \eqref{eq:convex-y-fix-total} in the number of jobs at slow servers when fixing the total number of jobs in the system.
The main role of this property is to guarantee that the two thresholds are ordered, i.e., $\ql\le \qh$.

\paragraph{Basic Notation.}
We use $\mathbb{R}$ to denote the set of real numbers, and $\Zp$ to denote the set of nonnegative integers.
For any real number $u$, let $u^+\triangleq \max\{u,0\}$.

\paragraph{Organization of the Paper.}
The rest of the paper is organized as follows.
We first focus on the discounted-cost problem in Sections~\ref{sec:prelim} and \ref{sec:discounted}.
In Section~\ref{sec:prelim}, we set up the state representation, value iteration, and define a cone of value functions with six desired inequalities.
Section~\ref{sec:discounted} is our main section, where we prove the result for the discounted-cost problem in Theorem~\ref{thm:main}.
We then study the average-cost setting in Section~\ref{sec:average}, where we extend the result from the discounted-cost setting to the average-cost setting using a standard approach.
\ifarxiv
In addition to the technical content, Appendix~\ref{sec:report-gpt} includes a short report on the authors' interactions with GPT-5.5 Pro.
\fi

\section{Preliminaries for the Discounted-Cost Problem}\label{sec:prelim}
This section prepares for the proof of Theorem~\ref{thm:main} for the discounted-cost problem.
We first set up the state representation in Section~\ref{sec:state} and the value iteration in Section~\ref{sec:value-iteration}.
Then in Section~\ref{sec:cone}, we introduce a cone $\K$ of value functions, defined using a set of six inequalities, which are the inequalities to be propagated through value iteration.
We also show that for an optimal policy to have the threshold structure in Theorem~\ref{thm:main}, it suffices to show that the optimal value function is in the cone $\K$.

\subsection{State Representation}\label{sec:state}

The state of the system can be represented by
\begin{equation}\label{eq:state-full}
    s = (q, z, y) \in \Zp\times\{0,1\}\times\{0,1,2\},
\end{equation}
where $q$ is the number of queued jobs, $z$ is the number of jobs at the fast server, and $y$ is the number of jobs at the slow servers.
The cost is the total number of jobs in the system $q+z+y$.

When we consider the post-decision state, the state representation for the discounted-cost problem can be simplified using the following lemma.
\begin{lemma}\label{lem:simplified-state}
An optimal policy for the discounted-cost problem never lets the fast server idle after a decision if there are jobs in the queue at the decision time.
\end{lemma}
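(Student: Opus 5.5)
We prove the lemma with a sample-path interchange (coupling) argument. The underlying claim is that assigning a queued job to an idle fast server immediately is never worse than waiting. Since the discounted problem has a stationary optimal policy, it suffices to show the following. For any policy $\pi$ that, at some decision epoch with $q\ge 1$, leaves the fast server idle after its decision, there is a policy $\tpi$ that assigns a job to the fast server at that epoch and whose discounted cost is no larger. The strict version, that an optimal policy \emph{never} does this, follows by checking that the improvement is strictly positive. Concretely, the job sent early to the fast server strictly shortens the time that job spends in the system, which yields a strictly positive discounted reduction in expectation.

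We construct $\tpi$ by coupling. We use uniformization, or equivalently the exponential clocks of Poisson processes. Let both systems share the same arrival process. Let them also share the same potential-service-completion process of rate $\mu_1$ for the fast server, and the same processes of rate $\mu_2$ for the slow servers. At the epoch in question, $\tpi$ sends one queued job to the fast server; call it the tagged job. Thereafter, $\tpi$ mimics $\pi$ until the first time $\tau$ at which $\pi$ assigns a job to the fast server. Until $\tau$, the $\tpi$ system has one fewer queued job and one more job at the fast server. When $\pi$ assigns a job to the fast server, $\tpi$ takes no action, since its fast server is already occupied by the tagged job (if the tagged job is still there).

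Two cases arise. If the tagged job completed before $\tau$, then $\tpi$'s fast server is idle at $\tau$. At that moment $\tpi$ has one fewer job in queue, and the states coincide once $\pi$ moves a job into its fast server. If the tagged job has not completed by $\tau$, then by memorylessness both fast servers hold a job with the same residual service law, and the states coincide from $\tau$ on. Here we let the shared clock drive both systems.

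Before coupling time, the total number in system under $\tpi$ is at most that under $\pi$. It is strictly smaller by one on the interval between the tagged completion and $\tau$. Hence the discounted cost of $\tpi$ is pathwise no larger. The event that the tagged job completes before $\tau$ has positive probability, because $\tau$ is either infinite or preceded by an exponential duration. This gives the strict inequality.

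The main obstacle is the bookkeeping of feasibility. We must verify that $\tpi$ is a legitimate nonanticipative policy: every action it mimics must be feasible in its own state. Queue-based actions of $\pi$ for slow servers need $\tpi$'s queue to be large enough, yet $\tpi$ has one fewer queued job. The fix is as follows. If $\pi$ would assign the last queued job to a slow server while $\tpi$'s queue is empty, then in $\tpi$ the corresponding "missing" job is the tagged job itself. We swap labels in that case, or equivalently treat $\tau$ as the coupling time, so that the discrepancy is absorbed. Handling this case carefully, together with the nonpreemption constraint, is where we expect to spend the most effort.
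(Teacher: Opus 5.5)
There is a genuine gap, at exactly the point you flagged as the main obstacle. Your construction for the case where $\pi$'s post-decision queue is still nonempty matches the paper's Scenario~1. You tag a queued job, send it to the fast server, and mimic $\pi$ until $\pi$ first uses its fast server or routes its last queued job to a slow server.

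The hypothesis, however, only says the queue was nonempty at the decision time. So $\pi$ may route every queued job to slow servers and leave the fast server idle with an empty queue (e.g., $q=1$ and $\pi$ sends the job to Server~2). Then $\tpi$ has no spare queued job, and your claim that it has ``one fewer queued job and one more job at the fast server'' is false. Instead the two systems differ by one job that sits at the fast server under $\tpi$ and at a slow server under $\pi$. The same configuration arises inside your argument when $\pi$ routes its last queued job to a slow server while the tagged job is still in service.

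Swapping labels or declaring $\tau$ a coupling time does not resolve this. The states are not equal even up to relabeling, since one copy of the job is served at rate $\mu_1$ and the other at rate $\mu_2$. Under your shared-clock coupling, the slow-server clock of the $\pi$-system rings first with positive probability. Then $\pi$ finishes the job while $\tpi$ still holds it, so $\tC(t)=C(t)+1$ on an event of positive probability, and pathwise dominance fails.

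A quick sanity check shows something must be missing: your argument never uses $\mu_1>\mu_2$. Yet the lemma fails when $\mu_2>\mu_1$: with small $\lambda$, a lone queued job in an empty system should go to a slow server.

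The missing ingredient is the paper's Scenario~2 coupling, which compares fast versus slow placement of the same job.
\begin{itemize}
\item Take independent $\tau_A\sim\operatorname{Exp}(\mu_2)$ and $\tau_B\sim\operatorname{Exp}(\mu_1-\mu_2)$. Give the displaced job service time $\tau_{\textup{S}}=\tau_A$ at the slow server under $\pi$ and $\tau_{\textup{F}}=\min\{\tau_A,\tau_B\}$ at the fast server under $\tpi$. The fast copy then never finishes later.
\item If $\pi$ puts a new job $J_0$ on its idle fast server before $\min\{\tau_A,\tau_B\}$, let $\tpi$ send $J_0$ to its vacant slow server instead. By memorylessness the residual service times can be redrawn so that the two systems coincide up to relabeling.
\item Otherwise $\tpi$ copies $\pi$, and it is one job ahead on $[\tau_B,\tau_A)$ when $\tau_B<\tau_A$.
\end{itemize}
Scenario~1 then switches to this coupling at the event where $\pi$ routes its last queued job to a slow server before the tagged job finishes.

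There is also a smaller error. When the tagged job has already finished, the states do not ``coincide once $\pi$ moves a job into its fast server''; $\tpi$ is still one job short. So $\tpi$ must keep that server idle during $J_0$'s service. Likewise, if $\pi$ later routes its last queued job to a slow server, there is no tagged job left to swap in, so $\tpi$ must keep that slow server idle. The paper handles both cases this way.
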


Lemma~\ref{lem:simplified-state} also establishes the structure for the fast server for the discounted-cost problem in Theorem~\ref{thm:main}.
The proof of Lemma~\ref{lem:simplified-state} is similar to that for the two-server problem in \citet{Wal_84}, and thus it is deferred to Appendix~\ref{sec:proof-lem-simplified-state}.

With Lemma~\ref{lem:simplified-state}, instead of tracking $q$ and $z$, i.e., the number of jobs in the queue and at the fast server, respectively, it suffices to track their sum $x\triangleq q+z$.
Specifically, if $x=0$, the fast server is idle and the queue is empty; if $x\ge 1$, there is one job at the fast server and $x-1$ jobs in the queue.
With slight abuse of notation by reusing $s$, the system state can be written as
\begin{equation}
    s = (x,y)\in\Zp\times\{0,1,2\}.
\end{equation}
Then the cost function is
\begin{equation}
    c(x,y)=x+y.
\end{equation}

\subsection{Value Iteration Setup}\label{sec:value-iteration}

We consider the post-decision value function. 
That is, right after each event, i.e., a job arrival or departure, we first make the job assignment decisions, and then observe the state $s=(x,y)$.
Under a policy $\pi$ and discount rate $\alpha>0$, the value function, which is the total discounted cost, is given by
\begin{equation}
    V^{\pi}_{\alpha}(s)=\E{\int_0^{\infty} e^{-\alpha t}\left(X(t)+Y(t)\right)dt \given S(0)=s},
\end{equation}
where $S(t)=(X(t),Y(t))$ is the post-decision state at time $t$.
This cost is finite since jobs arrive according to a Poisson process.
Let $V^*_{\alpha}$ be the optimal discounted value function.

\paragraph{Bellman Operator.}
To define the Bellman operator for this post-decision value function, we first introduce the following two operators defined on real-valued functions on the state space $\Zp\times\{0,1,2\}$.
\begin{itemize}
\item \emph{Job assignment operator $\Qtrue$.\phantom{a}} 
For any function $f\colon \Zp\times\{0,1,2\}\to\mathbb
R$, let
\begin{equation}
(\Qtrue f)(x,y)=\min_{y\le k\le \min\{2,(x-1)^++y\}} f(x+y-k,k),  
\label{eq:old-1-2}
\end{equation}
where recall $(x-1)^+\triangleq \max\{x-1,0\}$.
When $f$ is interpreted as a value function, given the current state $(x,y)$, the operator $\Qtrue$ determines the number of jobs $k$ at the slow servers to minimize the value.
The constraint $k\ge y$ is due to the fact that existing jobs at the slow servers cannot be removed since job assignments are nonpreemptive.
The constraint $k\le (x-1)^+ + y$ means that at most $(x-1)^+$ additional jobs can be assigned to the slow servers.
This is again because job assignments are nonpreemptive;
only queued jobs can be assigned, while the job at the fast server, if any, cannot be moved.
In particular, some boundary cases are given below for clarity
\begin{equation}
    (\Qtrue f)(0,y)=f(0,y),\quad
    (\Qtrue f)(1,y)=f(1,y),\quad
    (\Qtrue f)(2,0)=\min\{f(2,0),f(1,1)\}.
\end{equation}

\item \emph{Event operator $\Pop$.\phantom{a}}
The event operator $\Pop$ captures the expected value after a job arrival or departure.
For any function $f\colon \Zp\times\{0,1,2\}\to\mathbb
R$, let
\begin{align}
(\Pop f)(x,0)&=\lambda f(x+1,0)+\mu_1 f((x-1)^+,0)+2\mu_2 f(x,0),\\
(\Pop f)(x,1)&=\lambda f(x+1,1)+\mu_1 f((x-1)^+,1)+\mu_2 f(x,0)+\mu_2 f(x,1),\\
(\Pop f)(x,2)&=\lambda f(x+1,2)+\mu_1 f((x-1)^+,2)+2\mu_2 f(x,1).
\end{align}
Here, the terms with $\lambda$ correspond to a job arrival, the terms with $\mu_1$ correspond to a potential job departure from the fast server, and the terms with $\mu_2$ correspond to potential job departures from the slow servers.
Let $\gamma=\lambda+\mu_1+2\mu_2$ be the total event rate.
Then note that $\frac{1}{\gamma}\Pop$ corresponds to a transition probability matrix.
\end{itemize}

We are now ready to define the Bellman operator $\Bellman$.
For any function $f\colon \Zp\times\{0,1,2\}\to\mathbb
R$, let
\begin{equation}\label{eq:Bellman}
(\Bellman f)(x, y) = c(x,y) + \frac{\rho_{\alpha}}{\gamma} (\Pop \Qtrue f)(x,y),
\end{equation}
where $\rho_{\alpha}=\frac{\gamma}{\alpha+\gamma}<1$.
Using standard arguments (see, e.g., \citealt{LinKum_84,Haj_84}), one can show that the following value iteration
\begin{equation}
    V_\alpha^{(n+1)}=\Bellman V_\alpha^{(n)},
\end{equation}
converges to $\Val\triangleq (\alpha+\gamma)V_\alpha^*$.
We will ignore the constant factor $\alpha+\gamma$ and treat $\Val$ as the optimal value function.

Standard results for discounted Markov decision processes imply that any policy attaining the minimum in $\Qtrue \Val$ is an optimal policy.
Specifically, for any deterministic policy $\pi$, let $\pi(x,y)$ denote the number of jobs assigned to the slow servers given the post-event state $(x,y)$.
Define the policy $\pi^*$ by
\begin{equation}
    \pi^*(x,y)=\min\;\operatorname*{arg\;min}\limits_{y\le k\le\min\{2,(x-1)^+ +y\}} \Val(x+y-k,k),
    \label{eq:opt-policy}
\end{equation}
where the outer $\min$ is for tie breaking.
Then $\pi^*$ is an optimal policy.

\paragraph{Modified Bellman Operator.}
In the proof, instead of using the original job assignment operator $\Qtrue$ and the corresponding Bellman operator $\Bellman$, we use modified versions of them, where we allow the job at the fast server to be reassigned to slow servers.
However, we later prove that such actions will not be used after one step of the modified value iteration (Lemma~\ref{lem:vi-consistency}).
Intuitively, moving a job from the fast server to a slow server can only increase the cost, and thus will not be used even when allowed.
Specifically, we define the modified job assignment operator $\Qop$ as
\begin{equation}
(\Qop f)(x,y)=
\min_{y\le k\le \min\{2,x+y\}} f(x+y-k,k),
\label{eq:old-1-3}
\end{equation}
which uses $x+y$ in the upper bound for $k$ instead of the $(x-1)^++y$ in $\Qtrue$ in \eqref{eq:old-1-2}.
Then the corresponding modified Bellman operator is defined as
\begin{equation}
(\Bellmanmodified f)(x, y) = c(x,y) + \frac{\rho_{\alpha}}{\gamma} (\Pop \Qop f)(x,y).
\end{equation}
One can still show that the value iteration
\begin{equation}\label{eq:vi-modified}
    \widetilde{V}_{\alpha}^{(n+1)}=\Bellmanmodified \widetilde{V}_{\alpha}^{(n)},
\end{equation}
converges to a limit $\widetilde{V}_{\alpha}^{(\infty)}$.
We will prove that $\widetilde{V}_{\alpha}^{(\infty)}=\Val$ in Lemma~\ref{lem:vi-consistency} in Section~\ref{sec:discounted}.

\subsection{Invariant Cone}\label{sec:cone}

We introduce a cone $\K$ below, which is designed in a way such that if the optimal value function is in $\K$, then the corresponding optimal policy has the threshold structure in Theorem~\ref{thm:main}.
Let cone $\K$ be a set of functions $f\colon \Zp\times\{0,1,2\}\to \mathbb{R}$ that satisfy the following six properties.
\begin{align}
&\textbf{(K1) Monotonicity in $x$:} \notag\\[-.4em]
&\qquad f(x+1,y)-f(x,y)\ge0,
    &&y=0,1,2,\quad x\ge0.                         \tag{K1} \label{eq:mono-x}\\
&\textbf{(K2) First multimodularity inequality:} \notag\\[-.4em]
&\qquad f(x+2,y)+f(x,y+1)-f(x+1,y)-f(x+1,y+1)\ge0,
    &&y=0,1,\quad x\ge0.                           \tag{K2}\label{eq:multimodular-1}\\
&\textbf{(K3) Second multimodularity inequality:} \notag\\[-.4em]
&\qquad f(x,y)+f(x+1,y+1)-f(x+1,y)-f(x,y+1)\ge0,
    &&y=0,1,\quad x\ge0.                           \tag{K3}\label{eq:multimodular-2}\\
&\textbf{(K4) Monotonicity in $y$:} \notag\\[-.4em]
&\qquad f(x,y+1)-f(x,y)\ge0,
    &&y=0,1,\quad x\ge0.                           \tag{K4}\label{eq:mono-y}\\
&\textbf{(K5) Convexity in $y$ fixing $x$:} \notag\\[-.4em]
&\qquad f(x,0)+f(x,2)-2f(x,1)\ge0,
    &&x\ge0.                                       \tag{K5}\label{eq:convex-y-fix-x}\\
&\textbf{(K6) Convexity in $y$ fixing total population:} \notag\\[-.4em]
&\qquad f(x+2,0)+f(x,2)-2f(x+1,1)\ge0,
    &&x\ge0.                                       \tag{K6}\label{eq:convex-y-fix-total}
\end{align}
It is easy to verify that a constant function, e.g., the zero function, is in $\K$.
Since these inequalities are linear and closed under pointwise limits, it is also easy to see that $\K$ is closed under nonnegative linear combinations and pointwise limits. 

The first three properties, \eqref{eq:mono-x}--\eqref{eq:multimodular-2}, are analogues of the corresponding properties in the two-server problem in \cite{Koo_95}.
It is worth noting that as in the two-server problem, the two multimodularity inequalities \eqref{eq:multimodular-1} and \eqref{eq:multimodular-2} imply convexity in $x$:
\begin{align}
f(x+2,y)+f(x,y)-2f(x+1,y)\ge0,
\qquad y=0,1,2,\quad x\ge0.            \label{eq:convex-x}
\end{align}
Indeed, for $y=0,1$, \eqref{eq:convex-x} follows by adding \eqref{eq:multimodular-1} and \eqref{eq:multimodular-2} at $(x,y)$.
For $y=2$, \eqref{eq:convex-x} follows by adding \eqref{eq:multimodular-1} at $(x,1)$ and \eqref{eq:multimodular-2} at $(x+1,1)$.

The remaining three inequalities have not appeared in the two-server problem.
Property \eqref{eq:mono-y} is a natural monotonicity property in $y$, and properties \eqref{eq:convex-y-fix-x} and \eqref{eq:convex-y-fix-total} are convexity properties in $y$, fixing $x$ and fixing the total population $x+y$, respectively.
The property \eqref{eq:convex-y-fix-total} is particularly novel, and it plays an especially important role in the proof.

We establish the threshold structure of the optimal policy in Lemma~\ref{lem:threshold}, assuming that the optimal value function is in the cone $\K$.
The proof of Lemma~\ref{lem:threshold} shows that properties \eqref{eq:multimodular-1} and \eqref{eq:convex-y-fix-total} lead to the desired threshold structure.

\subsubsection*{Intuitions on \eqref{eq:multimodular-1} and \eqref{eq:convex-y-fix-total}.}
Here we first provide some intuitive explanation on how \eqref{eq:multimodular-1} and \eqref{eq:convex-y-fix-total} lead to the threshold structure, which is the key intuition in this paper.
Suppose the optimal value function $\Val\in\K$.
Then applying \eqref{eq:multimodular-1} gives
\begin{equation}
\Val(x+1,0)-\Val(x,1)
\ge \Val(x,0)-\Val(x-1,1).
\end{equation}
The difference $\Val(x,0)-\Val(x-1,1)$ on the right hand side compares two actions when both slow servers are idle: assigning no job to a slow server leads to state $(x,0)$, while assigning one job leads to state $(x-1,1)$.
Thus, if this difference is positive, then assigning one job to a slow server is better than assigning none.
The inequality above says that once assigning one job is optimal at~$x$, it remains optimal at $x+1$.
So \eqref{eq:multimodular-1} gives monotonicity of the assignment decision as the queue length increases.

Property \eqref{eq:multimodular-1}, when applied to $\Val$ at $y=1$, also gives the analogous monotonicity when one slow server is already busy:
\begin{equation}
\Val(x,1)-\Val(x-1,2)
\ge \Val(x-1,1)-\Val(x-2,2).
\end{equation}
Here $\Val(x-1,1)-\Val(x-2,2)$ compares assigning no additional job to the one idle slow server with assigning one additional job.
If it is positive, then assigning one additional job is better at $x-1$, and the inequality above says it is also better at $x$.
Therefore, overall, property \eqref{eq:multimodular-1} implies that as the queue grows, the incentive to use slow servers can only increase.

The property \eqref{eq:convex-y-fix-total} plays a different role.
Applying it to $\Val$ gives
\begin{equation}
\Val(x,0)-\Val(x-1,1)
\ge\Val(x-1,1)-\Val(x-2,2).
\end{equation}
The left hand side compares assigning no job with assigning one job to the slow servers, and the right hand side compares assigning one job with assigning two jobs to the slow servers.
This inequality makes it impossible to have
\begin{equation}
\Val(x,0)-\Val(x-1,1)<0
\quad\text{and}\quad
\Val(x-1,1)-\Val(x-2,2)\ge0.
\end{equation}
That is, it rules out the pattern in which assigning one job is worse than assigning none, but assigning two jobs is better than assigning one.

\begin{lemma}\label{lem:threshold}
    If the optimal value function $\Val\in \K$, then the optimal policy $\pi^*$ given in \eqref{eq:opt-policy} has the threshold structure in Theorem~\ref{thm:main}.
\end{lemma}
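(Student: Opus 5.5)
We will read the threshold structure off the two difference sequences that compare consecutive actions,
\begin{equation*}
D_0(x)\triangleq \Val(x,0)-\Val(x-1,1),\qquad D_1(x)\triangleq \Val(x,1)-\Val(x-1,2),\qquad x\ge 1 .
\end{equation*}
Here $D_0(x)>0$ means that moving one job from the queue to a slow server is strictly better when both slow servers are idle. Likewise, $D_1(x)>0$ means that adding a second slow-server job is strictly better. The tie-breaking rule in \eqref{eq:opt-policy} picks the smallest minimizer, so a move is made only on strict improvement.

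\textbf{Step 1 (monotonicity).} By \eqref{eq:multimodular-1} at $y=0$ and $y=1$, as in the displayed intuition, $D_0$ and $D_1$ are nondecreasing in $x$ on their domains. Hence $\{x: D_1(x)>0\}$ is an up-set, and we define $\qh$ from its least element. Since the fast server is busy, $q=x-1$ when $y=1$ with one slow server idle, and $q=x$ in the pre-decision state when both slow servers are idle. The one-idle rule then reads ``assign iff $q\ge \qh-1$'', where $\qh=\infty$ if the set is empty. Similarly, the up-set $\{x: D_0(x)>0\}$ yields a candidate lower threshold $\ql'$.

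\textbf{Step 2 (both slow servers idle).} The decision in the both-idle case minimizes over $k\in\{0,1,2\}$ the values $\Val(x,0)$, $\Val(x-1,1)$ and $\Val(x-2,2)$, and \eqref{eq:convex-y-fix-total} gives $D_0(x)\ge D_1(x-1)$. If $D_1(x-1)>0$, then $D_0(x)>0$, so $k=2$ is chosen, since it beats $1$, which beats $0$. If $D_1(x-1)\le0$, then $k=2$ is never strictly better than $k=1$; it cannot beat $0$ strictly without $D_0(x)>0$, because the values of the three states satisfy $\Val(x-2,2)\ge \Val(x-1,1)$. Hence $k=1$ is chosen iff $D_0(x)>0$, and $k=0$ otherwise.

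Thus the assignment is $2$ iff $x-1\ge$ the least $x'$ with $D_1(x')>0$, which is the same condition $q\ge \qh$ as in the one-idle case shifted by one, matching the Remark. The assignment is $1$ iff $D_0(x)>0$ and not the previous case. Setting $\ql=\min\{\ql',\qh\}$ gives the three-region form. Consistency uses the implication $D_1(x-1)>0\Rightarrow D_0(x)>0$, which ensures that the regions are nested, so $\ql\le\qh$ really describes the policy.

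\textbf{Step 3 (boundary cases).} We check the small cases $q=1$ with both slow servers idle, where $k\le1$ and the formula $\min\{2,q\}$ applies, and $x\le 1$, where no decision is possible. The fast-server part of the statement is Lemma~\ref{lem:simplified-state}. The main care point is the bookkeeping between $q$ and $x$ across the two cases. It is needed to make the single threshold $\qh$ serve both the one-idle case (as $\qh-1$) and the two-idle case (as $\qh$) exactly. We will verify this by writing every condition in terms of the post-move state $(x-1,1)$ versus $(x-2,2)$.
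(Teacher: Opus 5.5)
Your argument is the paper's proof up to indexing. The paper uses $\Dzo(x)=\Val(x,0)-\Val(x-1,1)$ and $\Dtw(x)=\Val(x-1,1)-\Val(x-2,2)$, so your $D_1(x)$ is its $\Dtw(x+1)$. Monotonicity comes from \eqref{eq:multimodular-1}, the ordering from \eqref{eq:convex-y-fix-total}, and the thresholds are the first indices at which the differences become positive. Your Step~2 case analysis is correct and matches \eqref{eq:policy-large-x-y0}.

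One statement is wrong, and it matters because the $q$-versus-$x$ translation is most of what this lemma contains. In the simplified state $(x,y)$ with $x\ge1$, the fast server holds one job, so $q=x-1$ for every $y$, including the both-idle state $(x,0)$. Your claim in Step~1 that $q=x$ there is incorrect. Used literally, it would turn the $k=2$ condition $D_1(x-1)>0$, i.e.\ $x-1\ge\qh$, into $q\ge\qh+1$. It would also shift $\ql'$ by one, so the single $\qh$ would no longer serve both cases. Your Step~2 sentence (``$x-1\ge$ the least $x'$ \ldots, which is $q\ge\qh$'') already silently uses $q=x-1$, so the proposal is internally inconsistent rather than wrong in its conclusion.

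To fix it, take $q=x-1$ throughout and set $\qh=\min\{x\ge1: D_1(x)>0\}$ and $\ql=\min\{x\ge1: D_0(x)>0\}-1$; these are the paper's $\xh-1$ and $\xl-1$. If $\qh<\infty$, then \eqref{eq:convex-y-fix-total} at $\qh-1$ gives $D_0(\qh+1)\ge D_1(\qh)>0$, hence $\ql\le\qh$ directly. Your $\min\{\ql',\qh\}$ is therefore unnecessary. This same ordering is what makes the $q=1$ both-idle case agree with the $\min\{2,q\}$ branch.
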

\begin{myproof}
Consider the value function $\Val \in \K$, and define
\begin{align}
\Dzo(x)
&= \Val(x,0)-\Val(x-1,1),
&& x\ge 1, \label{eq:def-d01} \\
\Dtw(x)
&= \Val(x-1,1)-\Val(x-2,2),
&& x\ge 2. \label{eq:def-d12}
\end{align}
Then the policy $\piopt$ in \eqref{eq:opt-policy} can be expressed using $\Dzo$ and $\Dtw$ as follows.
\begin{itemize}
    \item If $x=0$ or $1$,
    \begin{align}
    \piopt(0,y)=y, \qquad \piopt(1,y)=y,\qquad y=0,1,2.
    \label{eq:policy-small-x}
    \end{align}
    \item If $x=2$,
    \begin{align}
    \piopt(x,0)
    &=
    \begin{cases}
    0, & \text{if } \Dzo(x)\le 0,\\
    1, & \text{if } \Dzo(x)>0,
    \end{cases} \label{eq:policy-x2-y0} \\
    \piopt(x,1)
    &=
    \begin{cases}
    1, & \text{if } \Dtw(x+1)\le 0,\\
    2, & \text{if } \Dtw(x+1)>0,
    \end{cases} \label{eq:policy-x2-y1}\\
    \piopt(x,2)
    &=2.
    \end{align}
    \item If $x\ge 3$,
    \begin{align}
    \piopt(x,0)
    &=
    \begin{cases}
    0, & \text{if } \Dzo(x)\le 0,\\
    1, & \text{if } \Dzo(x)>0 \text{ and } \Dtw(x)\le 0,\\
    2, & \text{if } \Dtw(x)>0,
    \end{cases} \label{eq:policy-large-x-y0} \\
    \piopt(x,1)
    &=
    \begin{cases}
    1, & \text{if } \Dtw(x+1)\le 0,\\
    2, & \text{if } \Dtw(x+1)>0,
    \end{cases} \label{eq:policy-large-x-y1}\\
    \piopt(x,2)
    &=2.
    \end{align}
\end{itemize}
We show below that the rules for $x\ge 2$ correspond to the threshold structure in Theorem~\ref{thm:main}.

We first show that $\Dzo(x)$ and $\Dtw(x)$ are nondecreasing in $x$. For all $x\ge 1$,
\begin{align}
\Dzo(x+1)-\Dzo(x)
&= \Val(x+1,0)-\Val(x,1)
   -\bigl(\Val(x,0)-\Val(x-1,1)\bigr) \label{eq:d01-difference-first-line} \\
&= \Val(x+1,0)+\Val(x-1,1)-\Val(x,0)-\Val(x,1) \label{eq:d01-difference-expanded} \\
&\ge 0, \label{eq:d01-monotone}
\end{align}
where the last inequality follows from \eqref{eq:multimodular-1} at $(x-1,0)$. Similarly, for all $x\ge 2$,
\begin{align}
\Dtw(x+1)-\Dtw(x)
&= \Val(x,1)-\Val(x-1,2)
   -\bigl(\Val(x-1,1)-\Val(x-2,2)\bigr) \label{eq:d12-difference-first-line} \\
&= \Val(x,1)+\Val(x-2,2)-\Val(x-1,1)-\Val(x-1,2) \label{eq:d12-difference-expanded} \\
&\ge 0, \label{eq:d12-monotone}
\end{align}
where the last inequality follows from \eqref{eq:multimodular-1} at $(x-2,1)$.

With the monotonicity of $\Dzo(x)$ and $\Dtw(x)$, we define
\begin{align}
\xl &= \min\{x\ge 1\colon  \Dzo(x)>0\}, \label{eq:def-xL} \\
\xh &= \min\{x\ge 2\colon \Dtw(x)>0\}, \label{eq:def-xH}
\end{align}
where the $\min$ is set to $\infty$ when the set is empty.
We now show that $\xl\le \xh$ using \eqref{eq:convex-y-fix-total}. To show $\xl\le \xh$, it suffices to show that
\begin{align}
\Dzo(x) \ge \Dtw(x), \qquad x\ge 2. \label{eq:d01-dominates-d12}
\end{align}
Indeed, if $\xh=\infty$, then $\xl\le\xh$;
if $\xh<\infty$,
then \eqref{eq:d01-dominates-d12} implies
\begin{align}
\Dzo(\xh)\ge \Dtw(\xh)>0, \label{eq:xH-implies-d01-positive}
\end{align}
and hence $\xl\le \xh$.

To prove \eqref{eq:d01-dominates-d12}, note that for all $x\ge 2$,
\begin{align}
\Dzo(x)-\Dtw(x)
&= \Val(x,0)-\Val(x-1,1)
   -\bigl(\Val(x-1,1)-\Val(x-2,2)\bigr) \label{eq:d01-minus-d12-first-line} \\
&= \Val(x,0)+\Val(x-2,2)-2\Val(x-1,1) \label{eq:d01-minus-d12-expanded} \\
&\ge 0, \label{eq:d01-minus-d12-nonnegative}
\end{align}
where the last inequality follows from \eqref{eq:convex-y-fix-total} at $x-2$.

Now, with $1\le \xl\le \xh \le \infty$ defined, let
\begin{align}
\ql &= \xl-1, \label{eq:def-qL} \\
\qh &= \xh-1, \label{eq:def-qH}
\end{align}
where we use the convention $\infty-1=\infty$.
Then $0\le \ql\le \qh \le \infty$.
We have the following equivalence relationships. For all $x\ge 2$, with $q=x-1$,
\begin{align}
\Dzo(x)\le 0 \quad
&\Leftrightarrow \quad x<\xl \quad
\Leftrightarrow \quad q<\ql, \label{eq:d01-threshold-equivalence} \\
\Dtw(x+1)\le 0 \quad
&\Leftrightarrow \quad x<\xh-1 \quad
\Leftrightarrow \quad q<\qh-1. \label{eq:d12-xplus1-threshold-equivalence}
\end{align}
For all $x\ge 3$, with $q=x-1$,
\begin{align}
\Dtw(x)\le 0 \quad
&\Leftrightarrow \quad x<\xh \quad
\Leftrightarrow \quad q<\qh. \label{eq:d12-threshold-equivalence}
\end{align}
Therefore, the optimal policy $\piopt$ can equivalently be written as follows. 
\begin{itemize}
\item If $x=0$ or $1$,
\begin{align}
\piopt(0,y)=y, \qquad \piopt(1,y)=y \qquad y=0,1,2. \label{eq:threshold-policy-small-x}
\end{align}
\item If $x=2$ and $q=x-1$,
\begin{align}
\piopt(x,0)
&=
\begin{cases}
0, & \text{if } q<\ql,\\
1, & \text{if } q\ge \ql,
\end{cases} \label{eq:threshold-policy-x2-y0} \\
\piopt(x,1)
&=
\begin{cases}
1, & \text{if } q<\qh-1,\\
2, & \text{if } q\ge \qh-1,
\end{cases} \label{eq:threshold-policy-x2-y1}\\
\piopt(x,2)&=2.
\end{align}
\item If $x\ge 3$ and $q=x-1$,
\begin{align}
\piopt(x,0)
&=
\begin{cases}
0, & \text{if } q<\ql,\\
1, & \text{if } \ql\le q<\qh,\\
2, & \text{if } q\ge \qh,
\end{cases} \label{eq:threshold-policy-large-x-y0} \\
\piopt(x,1)
&=
\begin{cases}
1, & \text{if } q<\qh-1,\\
2, & \text{if } q\ge \qh-1,
\end{cases} \label{eq:threshold-policy-large-x-y1}\\
\piopt(x,2)&=2.
\end{align}
\end{itemize}
This is the threshold structure in Theorem~\ref{thm:main}.
\ifarxiv
 
\else
\Halmos
\fi
\end{myproof}

\section{Proof for the Discounted-Cost Problem}\label{sec:discounted}

In this section, we prove the threshold structure in Theorem~\ref{thm:main} for the slow servers in the discounted-cost setting through a series of lemmas.

We perform the value iteration in \eqref{eq:vi-modified} starting from the zero function, which is a function in the cone $\K$.
With the construction of the cone $\K$ and Lemma~\ref{lem:threshold}, it suffices to show that the value iteration in \eqref{eq:vi-modified} preserves the cone $\K$ and that its limit $\widetilde{V}_{\alpha}^{(\infty)}$ coincides with the optimal value function $\Val$.
Below we first state Lemmas~\ref{lem:qclosure}--\ref{lem:vi-consistency} that establish these results, and we then prove these lemmas in Sections~\ref{sec:proof-lem-qclosure}--\ref{sec:proof-lem-vi-consistency}.
We have also verified Lemmas~\ref{lem:qclosure}--\ref{lem:vi-consistency} in Lean 4, 
\ifarxiv
with a GitHub repository published at \citet{GitLeanOneFastTwoSlow_26}.
\else
with an anonymized GitHub repository published at \citet{GitLeanOneFastTwoSlow_26_anonymous}.
\fi

Recall the value iteration in \eqref{eq:vi-modified} below
\begin{equation}
    \widetilde{V}_{\alpha}^{(n+1)}=\Bellmanmodified \widetilde{V}_{\alpha}^{(n)},
\end{equation}
where $\Bellmanmodified$ is defined as
\begin{equation}
    (\Bellmanmodified f)(x, y) = c(x,y) + \frac{\rho_{\alpha}}{\gamma} (\Pop \Qop f)(x,y),\qquad x\in\Zp,\;y\in\{0,1,2\}.
\end{equation}
To prove that this value iteration preserves $\K$, i.e., if $f\in\K$, then $\Bellmanmodified f\in \K$, we note the following.
It is not hard to verify that the cost function $c\in\K$.
Since $\K$ is closed under nonnegative linear combination, it suffices to prove that if $f\in\K$, then $\Pop \Qop f\in \K$.
This is established through Lemmas~\ref{lem:qclosure} and \ref{lem:pclosure} below.
The result that this value iteration converges to the optimal value function is established in Lemma~\ref{lem:vi-consistency}.

\begin{lemma}[Assignment closure]\label{lem:qclosure}
If $f\in\K$, then $\Qop f\in\K$.
\end{lemma}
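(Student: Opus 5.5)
The plan is to change coordinates to total population and slow-server count. Set $g(n,k)\triangleq f(n-k,k)$ for $0\le k\le\min\{2,n\}$, so that
\begin{equation*}
(\Qop f)(x,y)=\min_{y\le k\le \min\{2,n\}} g(n,k),\qquad n=x+y .
\end{equation*}
In these coordinates \eqref{eq:convex-y-fix-total} says that $k\mapsto g(n,k)$ is convex for each fixed $n$. Next, \eqref{eq:multimodular-1} at $(x,y)$ with $n=x+y$ reads
\begin{equation*}
g(n+2,y)-g(n+2,y+1)\ge g(n+1,y)-g(n+1,y+1).
\end{equation*}
Hence the marginal gain of moving one more job to the slow servers is nondecreasing in $n$.

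First step: use convexity in $k$ to get unimodality. Let $k^*(n)$ be the smallest unconstrained minimizer of $g(n,\cdot)$ over $\{0,\dots,\min\{2,n\}\}$. Then
\begin{equation*}
(\Qop f)(x,y)=g\bigl(x+y,\max\{y,k^*(x+y)\}\bigr).
\end{equation*}
The supermodularity above shows that $k^*(n)$ is nondecreasing in $n$, exactly as in the proof of Lemma~\ref{lem:threshold}. This gives an explicit description of $\Qop f$: on the region $y\ge k^*(n)$ it equals $f(x,y)$, and otherwise it equals $f(n-k^*(n),k^*(n))$, a function of $n$ alone.

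Second step: verify each of \eqref{eq:mono-x}--\eqref{eq:convex-y-fix-total} for $h=\Qop f$. The standard exchange trick applies. Write an inequality as $\sum_i h(s_i)-\sum_j h(t_j)\ge0$. Substitute the exact minimizers for the positive terms $h(s_i)$, and substitute suitably chosen feasible (not necessarily optimal) actions for the negative terms $h(t_j)$, which can only increase them. The choice of feasible actions is guided by the monotone $k^*$. After this substitution, the resulting expression in $f$ should be a nonnegative combination of \eqref{eq:mono-x}--\eqref{eq:convex-y-fix-total}, and where needed \eqref{eq:convex-x}.

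The case split is over the values $k^*(n),k^*(n+1),k^*(n+2)\in\{0,1,2\}$, which are monotone in $n$, so only a handful of patterns occur. They are split further by whether the constraint $k\ge y$ binds. Some inequalities are immediate:
\begin{itemize}
\item \eqref{eq:mono-x} holds because increasing $x$ increases $n$, and feasible sets are nested appropriately; then apply \eqref{eq:mono-x} and \eqref{eq:mono-y} of $f$ along the path $(n-k,k)\to(n+1-k,k)$.
\item \eqref{eq:mono-y} holds because the feasible set for $(x,y+1)$ shrinks relative to that for $(x+1,y)$ at the same $n+1$. More precisely, $h(x,y+1)\ge h(x+1,y)$ follows directly, and combining this with \eqref{eq:mono-x} for $h$ gives the claim.
\end{itemize}

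The hard part will be \eqref{eq:convex-y-fix-total} and \eqref{eq:convex-y-fix-x}, together with \eqref{eq:multimodular-2}, for $h$. In these, the positive terms sit at different totals $n$, and their minimizers can jump from $0$ to $2$. For \eqref{eq:convex-y-fix-total} all three points $(x+2,0)$, $(x+1,1)$ and $(x,2)$ share the total $n=x+2$. The term $h(x,2)=f(x,2)$ is fixed, so the inequality reduces to checking that $\min_k g(n,k)+g(n,2)\ge 2\min_{k\ge1}g(n,k)$. This should follow from convexity of $g(n,\cdot)$ by cases on $k^*(n)$, so I expect it to be easy.

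The genuinely delicate cases are \eqref{eq:convex-y-fix-x} and \eqref{eq:multimodular-2}. There I would pair minimizers across totals $n$ and $n+1$ or $n+2$ and use \eqref{eq:multimodular-1} and \eqref{eq:multimodular-2} of $f$ to shift an action by one unit of total. I would also check the boundary totals $n\le 2$, where the range $\min\{2,n\}$ truncates, separately by direct computation.
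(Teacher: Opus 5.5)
Your setup is the paper's proof written in total-population coordinates. Your $g(n,0)-g(n,1)$ and $g(n,1)-g(n,2)$ are the paper's $D_{01}(n)$ and $D_{12}(n)$, your $\max\{y,k^*(n)\}$ is its selector $a(x,y)$, and your exchange trick is its ``selection condition.'' What you actually argue is correct: the $\max\{y,k^*\}$ representation, monotonicity of $k^*$, \eqref{eq:mono-x}, and the reduction for \eqref{eq:convex-y-fix-total}. Your proof of \eqref{eq:mono-y}, via the nesting $h(x,y+1)\ge h(x+1,y)$ followed by \eqref{eq:mono-x} for $h$, is shorter than the paper's case list. However, the proposal stops where the lemma's content lies. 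First, \eqref{eq:multimodular-1} for $h=\Qop f$ is never addressed. In your notation it follows from $\bar g(m,y)-\bar g(m,y+1)=\min\{g(m,y)-g(m,y+1),0\}$, where $\bar g(m,j)=\min_{k\ge j}g(m,k)$; this identity is a consequence of convexity of $g(m,\cdot)$. It is exactly the paper's $\bar D_{01}=\min\{D_{01},0\}$, $\bar D_{12}=\min\{D_{12},0\}$, and it yields \eqref{eq:convex-y-fix-total} at the same time.

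Second, for \eqref{eq:multimodular-2} and \eqref{eq:convex-y-fix-x} you only name a tool: pair minimizers across totals and shift by one unit using \eqref{eq:multimodular-1} and \eqref{eq:multimodular-2}. That tool fails in precisely the jump case you flag. Take \eqref{eq:multimodular-2} for $h$ at $(x,0)$ with $k^*(x)=0$ and $k^*(x+2)=2$. The natural pairing bounds $h(x+1,0)\le f(x+1,0)$ and $h(x,1)\le f(x-1,2)$, which yields $f(x,0)+f(x,2)-f(x+1,0)-f(x-1,2)$. This is not nonnegative on $\K$: $f(x,y)=x^2$ lies in $\K$ and gives $-2$ for $x\ge1$. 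The missing ingredient is \eqref{eq:convex-y-fix-x} of $f$, combined with routing both negative terms through the middle action. Since $h(x+1,0)\le f(x,1)$ and $h(x,1)\le f(x,1)$, the expression is at least $f(x,0)+f(x,2)-2f(x,1)\ge0$. Every pattern in the paper with $a_{00}=0$ and $a_{11}=2$ uses \eqref{eq:convex-y-fix-x} in this way. With that added, your framework closes quickly. For $(k^*(x),\max\{1,k^*(x+2)\})\in\{(0,1),(1,1),(1,2),(2,2)\}$, bounding the two negative terms at those same two actions gives \eqref{eq:multimodular-2} or \eqref{eq:convex-x}, and the case $y=1$ is similar. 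Then \eqref{eq:convex-y-fix-x} for $h$ needs only three cases on $k^*(x)$:
\begin{itemize}
\item action $1$ twice, with \eqref{eq:convex-y-fix-x};
\item actions $1$ and $2$, with \eqref{eq:multimodular-2} at $(x-1,1)$;
\item action $2$ twice, with \eqref{eq:convex-x}.
\end{itemize}
Because you may compare against non-optimal actions, this enumeration is shorter than the paper's, which works with the exact minimizers of all four terms.
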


\begin{lemma}[Event closure]\label{lem:pclosure}
If $f\in\K$, then $\Pop f\in\K$.
\end{lemma}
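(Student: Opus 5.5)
Since $\Pop$ is linear, I will write $\Pop f$ as a sum of pieces and check each of (K1)--(K6) for $\Pop f$ as a nonnegative combination of inequalities for $f$. Specifically,
\begin{equation*}
\Pop f = \lambda A f + \mu_1 D_1 f + \mu_2 S f,
\end{equation*}
where $(Af)(x,y)=f(x+1,y)$ is the arrival term, $(D_1 f)(x,y)=f((x-1)^+,y)$ is the fast-departure term, and $S$ collects the slow-departure terms: $(Sf)(x,0)=2f(x,0)$, $(Sf)(x,1)=f(x,0)+f(x,1)$, and $(Sf)(x,2)=2f(x,1)$. It suffices to show that each of $A$, $D_1$ and $S$ maps $\K$ into $\K$, and then use that $\K$ is a convex cone.

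\textbf{Arrival and fast departure.} For $A$, every inequality (K1)--(K6) is a statement about differences along directions in the $(x,y)$ lattice. Shifting $x$ by $+1$ simply gives the same inequality at $x+1$, so $A$ preserves $\K$ trivially.

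For $D_1$, the interior case $x\ge 1$ (more precisely, whenever all arguments $x-1,\dots$ are nonnegative) is again a shift. Only the boundary, where some argument would be $x-1<0$ and is truncated to $0$, needs separate treatment. I will check each inequality at $x=0$ (and at $x=1$ for (K2) and (K6), whose arguments span $x$ to $x+2$):
\begin{itemize}
\item (K1) becomes $f(0,y)-f(0,y)=0$.
\item (K3) at $x=0$ becomes $f(0,y)+f(0,y+1)-f(0,y)-f(0,y+1)=0$.
\item (K2) at $x=0$ becomes $f(1,y)+f(0,y+1)-f(0,y)-f(0,y+1)=f(1,y)-f(0,y)\ge0$ by (K1).
\item (K6) at $x=0$ becomes $f(1,0)+f(0,2)-2f(0,1)$. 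This equals (K5) at $0$ plus $f(1,0)-f(0,0)\ge0$.
\item (K4) and (K5) involve a single $x$, so they are straightforward.
\end{itemize}
So the boundary reduces to (K1) together with the other inequalities at $x=0$.

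\textbf{Slow departures (main obstacle).} The hard part is $S$, because it mixes $y$-levels in a non-shift way. For each of (K1)--(K6) I will expand $Sf$ and regroup the result as a nonnegative combination of instances of (K1)--(K6) for $f$:
\begin{itemize}
\item \textbf{(K1) for $Sf$.} Every level is a sum of $x$-differences at $y=0$ or $y=1$, so this follows from (K1).
\item \textbf{(K4) for $Sf$.} At $y=0$ the difference is $f(x,1)-f(x,0)\ge0$. At $y=1$ it is $f(x,1)-f(x,0)$, which is also fine.
\item \textbf{(K5) for $Sf$.} This is $2f(x,0)+2f(x,1)-2f(x,0)-2f(x,1)=0$.
\item \textbf{(K3) for $Sf$.} At $y=0$, the expression $2f(x,0)+f(x+1,0)+f(x+1,1)-2f(x+1,0)-f(x,0)-f(x,1)$ simplifies to (K3) for $f$ at $(x,0)$. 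At $y=1$ the expression should reduce to (K3) at $(x,0)$.
\item \textbf{(K2) for $Sf$.} At $y=0$ the expression is $2f(x+2,0)+f(x,0)+f(x,1)-2f(x+1,0)-f(x+1,0)-f(x+1,1)$. This equals (K2) at $(x,0)$ plus the convexity in $x$ at $y=0$, i.e.\ \eqref{eq:convex-x}. At $y=1$ the expression should similarly decompose into (K2) at $(x,0)$ plus convexity at $y=1$.
\item \textbf{(K6) for $Sf$.} This gives $2f(x+2,0)+2f(x,1)-2f(x+1,0)-2f(x+1,1)$, which is twice (K2) at $(x,0)$.
\end{itemize}

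Because these regroupings are exactly where an inequality could fail to close, I expect this bookkeeping to be the step that needs the most care. I will verify each identity term by term before relying on it. Together with linearity and the cone property, these checks give $\Pop f\in\K$.
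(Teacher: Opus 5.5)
Your proposal is correct and follows the paper's proof essentially step for step: the same decomposition $\Pop=\lambda\Aop+\mu_1\DFop+\mu_2\DSop$, the shift argument for arrivals, the same $x=0$ boundary identities for the fast-departure operator (including (K6) at $0$ as (K5) plus (K1)), and the same regroupings for the slow-departure operator. The two $y=1$ cases you left as ``should'' do check out: (K3) reduces to (K3) at $(x,0)$, and (K2) reduces to (K2) at $(x,0)$ plus $x$-convexity at $y=1$; your extra check at $x=1$ is unnecessary, since there $\DFop$ is a pure shift.
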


\begin{lemma}[Value function consistency]\label{lem:vi-consistency}
If $f\in \K$, then $\Bellmanmodified (\Bellmanmodified f)=\Bellman(\Bellmanmodified f)$.
Consequently, $\widetilde{V}_{\alpha}^{(\infty)}=\Val$.
\end{lemma}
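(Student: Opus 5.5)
We prove Lemma~\ref{lem:vi-consistency} in two parts: first the operator identity $\Bellmanmodified(\Bellmanmodified f)=\Bellman(\Bellmanmodified f)$ for $f\in\K$, and then the identification of the limit.

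\textbf{Operator identity.} Let $g=\Bellmanmodified f$. Since $\Bellmanmodified$ and $\Bellman$ differ only through $\Qop$ versus $\Qtrue$, it suffices to show $\Qop g=\Qtrue g$ pointwise. The two minimizations agree except when $x\ge1$: then $\Qop$ additionally allows $k=(x-1)+y+1=x+y$ (when $x+y\le 2$), i.e., the state $(0,x+y)$. So the cases to check are
\[
(x,y)\in\{(1,0),(1,1),(2,0)\},
\]
where the extra candidate is $(0,1)$, $(0,2)$, $(0,2)$ respectively. We must show this extra candidate is never strictly better than the best allowed one. Concretely we need
\[
g(1,0)\le g(0,1),\qquad g(1,1)\le g(0,2),\qquad \min\{g(2,0),g(1,1)\}\le g(0,2).
\]
The third follows from the second. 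So the real content is the inequality
\[
g(x+1,y)\le g(x,y+1)\quad\text{at } x=0,\ y=0,1.
\]
This says that having the job at the fast server is better than having it at a slow server.

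\textbf{Proving the key inequality.} We compute $g(1,y)-g(0,y+1)$ directly from the definition. The cost terms agree, since $c(1,y)=c(0,y+1)$. For the event terms, write $h=\Qop f$, which lies in $\K$ by Lemma~\ref{lem:qclosure}. Expanding $\Pop h$ at $(1,0)$ and $(0,1)$ gives
\[
(\Pop h)(1,0)-(\Pop h)(0,1)=\lambda[h(2,0)-h(1,1)]+\mu_1[h(0,0)-h(0,1)]+2\mu_2 h(1,0)-\mu_2h(0,0)-\mu_2h(0,1).
\]
We then need to regroup this into nonpositive pieces using the $\K$ inequalities for $h$ and the rate ordering $\mu_2<\mu_1$. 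Two ingredients are available:
\begin{itemize}
\item $h(2,0)\le h(1,1)$ holds automatically, because $h=\Qop f$ and from state $(2,0)$ the modified operator may choose $k=1$, reaching the same minimization set as from $(1,1)$ or a superset of it. In general, $\Qop f(x+1,y)\le \Qop f(x,y+1)$ holds because the feasible set from $(x+1,y)$ contains that from $(x,y+1)$: both have the same total, and the constraint $k\ge y$ is weaker.
\item The $\mu_1$ term is $\le0$ by \eqref{eq:mono-y}.
\end{itemize}
The remaining $\mu_2$ terms are $\mu_2[h(1,0)-h(0,0)]+\mu_2[h(1,0)-h(0,1)]$. The second bracket is $\le 0$ by the feasible-set argument. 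The first bracket is $\ge0$, so it must be absorbed by the $\mu_1$ term. Using $\mu_1>\mu_2$, we need $h(1,0)-h(0,0)\le h(0,1)-h(0,0)$, i.e., $h(1,0)\le h(0,1)$, which again is the feasible-set fact. Indeed, writing
\[
\mu_1[h(0,0)-h(0,1)]+\mu_2[h(1,0)-h(0,0)]\le \mu_2[h(1,0)-h(0,1)]+(\mu_1-\mu_2)[h(0,0)-h(0,1)]\le0
\]
closes the argument. The case $y=1$ is analogous, with $(\Pop h)(1,1)-(\Pop h)(0,2)$. There the departure terms pair $h(1,0),h(1,1)$ against $2h(0,1)$, and the same feasible-set domination $h(x+1,y)\le h(x,y+1)$ plus \eqref{eq:mono-y} and $\mu_1>\mu_2$ handle it. This bookkeeping is the main obstacle, and I would work out the $y=1$ grouping explicitly rather than assume it mirrors $y=0$.

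\textbf{Identifying the limit.} Starting from $\widetilde V^{(0)}=0\in\K$, Lemmas~\ref{lem:qclosure} and \ref{lem:pclosure} give $\widetilde V^{(n)}\in\K$ for all $n$. The identity then gives $\widetilde V^{(n+1)}=\Bellman\widetilde V^{(n)}$ for $n\ge1$, so the modified iterates are, from index $1$ on, iterates of $\Bellman$ started at $\widetilde V^{(1)}$. Since $\Bellman$ is a $\rho_\alpha$-contraction in a suitable weighted sup-norm (costs grow linearly), the iteration converges to its unique fixed point $\Val$ from any such starting point. This is the same standard argument the paper cites, and it yields $\widetilde V_\alpha^{(\infty)}=\Val$.
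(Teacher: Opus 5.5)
Your proposal is correct and follows essentially the same route as the paper. You reduce $\Qop g=\Qtrue g$ at $(1,0),(1,1),(2,0)$ to $g(1,0)\le g(0,1)$ and $g(1,1)\le g(0,2)$, prove these by regrouping the $\Pop$-differences of $h=\Qop f$ with the feasible-set dominance $h(x+1,y)\le h(x,y+1)$, monotonicity in $y$, and $\mu_1>\mu_2$, and then note that the modified iterates from index $1$ onward are $\Bellman$-iterates started at $\widetilde V_\alpha^{(1)}$. For the $y=1$ case you left open, the grouping $\lambda[h(2,1)-h(1,2)]+(\mu_1-\mu_2)[h(0,1)-h(0,2)]+\mu_2[h(1,0)-h(0,1)]+\mu_2[h(1,1)-h(0,2)]$ works as you anticipated, and it is exactly the one the paper uses.
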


We remark that the cone $\K$ is not closed under the original job assignment operator, $\Qtrue$.
To see this, consider the function $f(x,y)=x$.
We can easily verify that $f\in\K$.
However, note that 
\begin{equation}
    (\Qtrue f)(0,0)=0,\quad (\Qtrue f)(1,0)=1,\quad (\Qtrue f)(2,0)=1.
\end{equation}
Then
\begin{equation}
    (\Qtrue f)(2,0)+(\Qtrue f)(0,0)-2(\Qtrue f)(1,0)=-1<0,
\end{equation}
which violates the convexity property in $x$ in \eqref{eq:convex-x}.
However, by Lemma~\ref{lem:vi-consistency}, for any $f\in\K$, if we apply $\Bellmanmodified$ first to get $\Bellmanmodified f$, when $\Bellman(\Bellmanmodified f)=\Bellmanmodified(\Bellmanmodified f)\in\K$.

\subsection{Proof of Lemma~\ref{lem:qclosure}}
\label{sec:proof-lem-qclosure}
\begin{myproof}
Consider any $f\in \K$, and we write $\bar f=\Qop f$ for convenience in this proof.  
Similar to the proof of Lemma~\ref{lem:threshold}, we first define
\begin{align}
    D_{01}(x)&=f(x,0)-f(x-1,1),
    && x\ge1, \label{eq:def-D01}\\
    D_{12}(x)&=f(x-1,1)-f(x-2,2),
    &&x\ge2.   \label{eq:def-D12}
\end{align}
Also following arguments similar to those in the proof of Lemma~\ref{lem:threshold}, we can show that $D_{01}(x)$ and $D_{12}(x)$ are nondecreasing in $x$ and $D_{01}(x)\ge D_{12}(x)$ for all $x\ge 2$.
In particular, \eqref{eq:multimodular-1} is equivalent to
\begin{align}
D_{01}(x+1)-D_{01}(x)
&=f(x+1,0)+f(x-1,1)-f(x,0)-f(x,1)\ge0,\qquad x\ge1, \label{eq:D01-monotone}\\
D_{12}(x+1)-D_{12}(x)
&=f(x,1)+f(x-2,2)-f(x-1,1)-f(x-1,2)\ge0,\qquad x\ge2, \label{eq:D12-monotone}
\end{align}
and \eqref{eq:convex-y-fix-total} is equivalent to
\begin{equation}
D_{01}(x)-D_{12}(x)
=f(x,0)+f(x-2,2)-2f(x-1,1)\ge0,
\qquad x\ge2.                                    \label{eq:gap-order}
\end{equation}

We then consider the number of jobs at the slow servers that achieves the minimum in $\Qop$:
\begin{equation}
    a(x,y)=\min\;\operatorname*{arg\;min}\limits_{y\le k\le\min\{2,x+y\}} f(x+y-k,k).       \label{eq:assignment-selector-def}
\end{equation}
Note that here since we use the modified job assignment operator $\Qop$, the upper constraint for $k$ is relaxed to $k\le\min\{2,x+y\}$.
It is easy to see that $a(1,0)\le a(0,1)$ and $a(x,0)\le a(x-1,1)\le a(x-2,2)$ for all $x\ge 2$ due to the optimization range.
Also, by definition,
\begin{equation}
    \bar f(x,y)=f(x+y-a(x,y),a(x,y)).                             \label{eq:barf-selector-representation}
\end{equation}

Using $D_{01}(x)$ and $D_{12}(x)$, we can write $a(x,y)$ as follows.
For $y=0$,
\begin{equation}
    a(0,0)=0,
\end{equation}
\begin{equation}
    a(1,0)=
    \begin{cases}
    0, & \text{if } D_{01}(1)\le0,\\
    1, & \text{if } D_{01}(1)>0,
    \end{cases}                                           \label{eq:assignment-selector-boundary10}
\end{equation}
and for $x\ge2$,
\begin{equation}
        a(x,0)=
        \begin{cases}
        0, & \text{if } D_{01}(x)\le0,\\
        1, & \text{if } D_{01}(x)>0\text{ and }D_{12}(x)\le0,\\
        2, & \text{if } D_{12}(x)>0.
        \end{cases}                                           \label{eq:assignment-selector-y0}
\end{equation}
Similarly, for $y=1$,
\begin{equation}
    a(0,1)=1,
\end{equation}
and for $x\ge1$,
\begin{equation}
        a(x,1)=
        \begin{cases}
        1, & \text{if } D_{12}(x+1)\le0,\\
        2, & \text{if } D_{12}(x+1)>0.
        \end{cases}                                           \label{eq:assignment-selector-y1}
\end{equation}
Also note that for $y=2$, by definition,
\begin{equation}
    a(x,2)=2.
\end{equation}
The properties of $D_{01}(x)$ and $D_{12}(x)$ in \eqref{eq:D01-monotone}--\eqref{eq:gap-order} imply that $a(x,0)\in\{0,1,2\}$ and $a(x,1)\in\{1,2\}$ are nondecreasing in $x$, and that for all $x\ge 2$,
\begin{equation}\label{eq:selectors-D}
    \begin{cases}
        a(x,0)=0,\; a(x-1,1)=1,\quad &\text{if }D_{01}(x)\le 0,\\
        a(x,0)=a(x-1,1)\ge 1,\quad&\text{if }D_{01}(x) > 0.
    \end{cases}
\end{equation}
We will also use the following \emph{selection condition} repeatedly: if $a(x,y)=k^*$, then
\begin{equation}
    f(x+y-k^*,k^*)\le f(x+y-k,k)
    \qquad \forall k\in\{y,\ldots,\min\{2,x+y\}\}.      \label{eq:assignment-selection-condition}
\end{equation}

We are now ready to prove that $\bar f$ satisfies \eqref{eq:mono-x}--\eqref{eq:convex-y-fix-total}.
In all the arguments below, the boundary cases for $x$ are covered, and all the inequalities hold as long as the input to $f$ is in $\Zp\times\{0,1,2\}$.

\paragraph{Proving $\bar f$ satisfies \eqref{eq:multimodular-1} and \eqref{eq:convex-y-fix-total}.}
Define
\begin{align}
    \bar D_{01}(x)&=\bar f(x,0)-\bar f(x-1,1), && x\ge1,
    \label{eq:def-bar-D01}\\
     \bar D_{12}(x)&=\bar f(x-1,1)-\bar f(x-2,2),&& x\ge2.
    \label{eq:def-bar-D12}
\end{align}
Then proving that $\bar f$ satisfies \eqref{eq:multimodular-1} and \eqref{eq:convex-y-fix-total} is equivalent to proving that $\bar D_{01}(x)$ and $\bar D_{12}(x)$ are nondecreasing in $x$ and $\bar D_{01}(x)\ge \bar D_{12}(x)$ for all $x\ge 2$.

By the property in \eqref{eq:selectors-D}, it is not hard to verify that
\begin{align}
    \bar D_{01}(x)&=\min\{D_{01}(x),0\},&& x\ge1,       \label{eq:bar-D01-min}\\
    \bar D_{12}(x)&=\min\{D_{12}(x),0\},&& x\ge2.       \label{eq:bar-D12-min}
\end{align}
Therefore, since $D_{01}(x)$ and $D_{12}(x)$ are nondecreasing in $x$ and $D_{01}(x)\ge D_{12}(x)$ for all $x\ge 2$, we know that $\bar D_{01}(x)$ and $\bar D_{12}(x)$ are nondecreasing in $x$ and $\bar D_{01}(x)\ge \bar D_{12}(x)$ for all $x\ge 2$.

\paragraph{Proving $\bar f$ satisfies \eqref{eq:mono-x}.}
The desired inequality is
\begin{equation}
    \bar f(x+1,y)-\bar f(x,y)\ge0,\qquad y=0,1,2,\;x\ge 0.                       \label{eq:bar-K1-desired}
\end{equation}
The minimizers involved are
\begin{equation}
    a_0=a(x,y),\qquad a_1=a(x+1,y), \label{eq:K1-selectors}
\end{equation}
which are ordered as $a_0\le a_1$ by the monotonicity of the minimizers.
Then \eqref{eq:bar-K1-desired} is equivalent to
\begin{equation}
    f(x+1+y-a_1,a_1)-f(x+y-a_0,a_0)\ge0,\qquad y=0,1,2,\;x\ge 0. \label{eq:bar-K1-desired-equiv}
\end{equation}
\begin{itemize}
\item If $a_0=a_1$, then \eqref{eq:bar-K1-desired-equiv} directly follows from \eqref{eq:mono-x} for $f$.
\item If $(a_0,a_1)=(0,1)$, then necessarily $y=0$, and \eqref{eq:bar-K1-desired-equiv} becomes
\begin{equation}
    f(x,1)-f(x,0)\ge0,                                  \label{eq:K1-pattern-01}
\end{equation}
which follows from \eqref{eq:mono-y} for $f$.  
\item If $(a_0,a_1)=(1,2)$, then \eqref{eq:bar-K1-desired-equiv} becomes: if $y=0$,
\begin{equation}
        f(x-1,2)-f(x-1,1)\ge0,                                \label{eq:K1-pattern-12-y0}
\end{equation}
or if $y=1$
\begin{equation}
        f(x,2)-f(x,1)\ge0.                                    \label{eq:K1-pattern-12-y1}
\end{equation}
Both follow from \eqref{eq:mono-y} for $f$.  
\item If $(a_0,a_1)=(0,2)$, then necessarily $y=0$, and \eqref{eq:bar-K1-desired-equiv} becomes
\begin{equation}
    f(x-1,2)-f(x,0)
    =[f(x-1,2)-f(x-1,1)]+[f(x-1,1)-f(x,0)]\ge0,          \label{eq:K1-skip-02}
\end{equation}
where the first bracket follows from \eqref{eq:mono-y}, and the second is due to the selection condition associated with $a_0=a(x,0)=0$.
\end{itemize}
This completes the proof that $\bar f$ satisfies \eqref{eq:mono-x}.

\paragraph{Proving $\bar f$ satisfies \eqref{eq:multimodular-2}.}
The desired inequality is
\begin{equation}
    \bar f(x,y)+\bar f(x+1,y+1)-\bar f(x+1,y)-\bar f(x,y+1)\ge0,\qquad y=0,1,\;x\ge 0.     
    \label{eq:bar-K3-desired}
\end{equation}
The minimizers involved are
\begin{equation}
        a_{00}=a(x,y),\quad a_{10}=a(x+1,y),\quad
        a_{01}=a(x,y+1),\quad a_{11}=a(x+1,y+1).             \label{eq:K3-selectors}
\end{equation}
They are ordered as $a_{00}\le a_{10}$ and $a_{01}\le a_{11}$ due to monotonicity, and $a_{10}\le a_{01}$ by definition.
Then \eqref{eq:bar-K3-desired} is equivalent to
\begin{equation}\label{eq:Q-closure-K3-expanded}
\begin{aligned}
    &f(x+y-a_{00},a_{00})+ f(x+y+2-a_{11},a_{11})\\
    &- f(x+y+1-a_{10},a_{10})- f(x+y+1-a_{01},a_{01})\ge0,\qquad y=0,1,\;x\ge 0.    
\end{aligned}
\end{equation}

We first consider the case $y=0$.
The possible values for $(a_{00},a_{10},a_{01},a_{11})$ are listed below, which are grouped based on the value of $(a_{01},a_{11})$:
\begin{align}
&(0,0,1,1),\ (0,1,1,1),\ (1,1,1,1),\\
&(0,0,1,2),\ (0,1,1,2),\ (1,1,1,2),\\
&(0,0,2,2),\ (0,1,2,2),\ (0,2,2,2),\ (1,1,2,2),\ (1,2,2,2),\ (2,2,2,2).
\label{eq:Q-closure-K3-y0-patterns}
\end{align}
We check these cases one by one.
\begin{itemize}
\item 
If $(a_{00},a_{10},a_{01},a_{11})=(0,0,1,1)$, then
\eqref{eq:Q-closure-K3-expanded} becomes
\begin{equation}
f(x,0)+f(x+1,1)-f(x+1,0)-f(x,1)\ge0,
\label{eq:Q-closure-K3-y0-0011}
\end{equation}
which follows from \eqref{eq:multimodular-2} for $f$ with $y=0$.

\item If $(a_{00},a_{10},a_{01},a_{11})=(0,1,1,1)$, then
\eqref{eq:Q-closure-K3-expanded} becomes
\begin{align}
& f(x,0)+f(x+1,1)-2f(x,1) \notag\\
&\quad =
\bigl[f(x,0)+f(x+1,1)-f(x+1,0)-f(x,1)\bigr]
+\bigl[f(x+1,0)-f(x,1)\bigr]\ge0,
\label{eq:Q-closure-K3-y0-0111}
\end{align}
where the first bracket is nonnegative by \eqref{eq:multimodular-2} for $f$ with $y=0$, and the second bracket is nonnegative
by the selection condition associated with $a_{10}=a(x+1,0)=1$.

\item If $(a_{00},a_{10},a_{01},a_{11})=(1,1,1,1)$, then
\eqref{eq:Q-closure-K3-expanded} becomes
\begin{equation}
f(x-1,1)+f(x+1,1)-2f(x,1)\ge0,
\label{eq:Q-closure-K3-y0-1111}
\end{equation}
which follows from the convexity property of $f(x,y)$ in $x$ in \eqref{eq:convex-x}.

\item 
If $(a_{00},a_{10},a_{01},a_{11})=(0,0,1,2)$, then
\eqref{eq:Q-closure-K3-expanded} becomes
\begin{align}
& f(x,0)+f(x,2)-f(x+1,0)-f(x,1) \notag\\
&\quad =
\bigl[f(x,0)+f(x,2)-2f(x,1)\bigr]
+\bigl[f(x,1)-f(x+1,0)\bigr]\ge0.
\label{eq:Q-closure-K3-y0-0012}
\end{align}
The first bracket is nonnegative by \eqref{eq:convex-y-fix-x} for $f$, and the second bracket is nonnegative by the
selection condition associated with $a_{10}=a(x+1,0)=0$.

\item  If $(a_{00},a_{10},a_{01},a_{11})=(0,1,1,2)$, then
\eqref{eq:Q-closure-K3-expanded} becomes
\begin{equation}
f(x,0)+f(x,2)-2f(x,1)\ge0,
\label{eq:Q-closure-K3-y0-0112}
\end{equation}
which is \eqref{eq:convex-y-fix-x} for $f$.

\item  If $(a_{00},a_{10},a_{01},a_{11})=(1,1,1,2)$, then
\eqref{eq:Q-closure-K3-expanded} becomes
\begin{align}
& f(x-1,1)+f(x,2)-2f(x,1) \notag\\
&\quad =
\bigl[f(x-1,1)+f(x,2)-f(x,1)-f(x-1,2)\bigr]
+\bigl[f(x-1,2)-f(x,1)\bigr]\ge0.
\label{eq:Q-closure-K3-y0-1112}
\end{align}
The first bracket is nonnegative by \eqref{eq:multimodular-2} for $f$ with $y=1$, evaluated at $x-1$, and the second
bracket is nonnegative by the selection condition associated with $a_{01}=a(x,1)=1$.

\item If $(a_{00},a_{10},a_{01},a_{11})=(0,0,2,2)$, then
\eqref{eq:Q-closure-K3-expanded} becomes
\begin{align}
& f(x,0)+f(x,2)-f(x+1,0)-f(x-1,2) \notag\\
&\quad =
\bigl[f(x,0)+f(x,2)-2f(x,1)\bigr]
+\bigl[f(x,1)-f(x+1,0)\bigr]
+\bigl[f(x,1)-f(x-1,2)\bigr]\ge0.
\label{eq:Q-closure-K3-y0-0022}
\end{align}
The first bracket is nonnegative by \eqref{eq:convex-y-fix-x}, the second by the selection condition associated with
$a_{10}=a(x+1,0)=0$, and the third by the selection condition associated with $a_{01}=a(x,1)=2$.

\item If $(a_{00},a_{10},a_{01},a_{11})=(0,1,2,2)$, then
\eqref{eq:Q-closure-K3-expanded} becomes
\begin{align}
& f(x,0)+f(x,2)-f(x,1)-f(x-1,2) \notag\\
&\quad =
\bigl[f(x,0)+f(x,2)-2f(x,1)\bigr]
+\bigl[f(x,1)-f(x-1,2)\bigr]\ge0.
\label{eq:Q-closure-K3-y0-0122}
\end{align}
The first bracket is nonnegative by \eqref{eq:convex-y-fix-x}, and the second is nonnegative by the selection condition
associated with $a_{01}=a(x,1)=2$.

\item If $(a_{00},a_{10},a_{01},a_{11})=(0,2,2,2)$, then
\eqref{eq:Q-closure-K3-expanded} becomes
\begin{align}
& f(x,0)+f(x,2)-2f(x-1,2) \notag\\
&\quad =
\bigl[f(x,0)+f(x,2)-2f(x,1)\bigr]
+2\bigl[f(x,1)-f(x-1,2)\bigr]\ge0.
\label{eq:Q-closure-K3-y0-0222}
\end{align}
The first bracket is nonnegative by \eqref{eq:convex-y-fix-x}, and the second bracket is nonnegative by the selection
condition associated with $a_{01}=a(x,1)=2$.

\item If $(a_{00},a_{10},a_{01},a_{11})=(1,1,2,2)$, then
\eqref{eq:Q-closure-K3-expanded} becomes
\begin{equation}
f(x-1,1)+f(x,2)-f(x,1)-f(x-1,2)\ge0,
\label{eq:Q-closure-K3-y0-1122}
\end{equation}
which is \eqref{eq:multimodular-2} for $f$ with $y=1$, evaluated at $x-1$.

\item If $(a_{00},a_{10},a_{01},a_{11})=(1,2,2,2)$, then
\eqref{eq:Q-closure-K3-expanded} becomes
\begin{align}
& f(x-1,1)+f(x,2)-2f(x-1,2) \notag\\
&\quad =
\bigl[f(x-1,1)+f(x,2)-f(x,1)-f(x-1,2)\bigr]
+\bigl[f(x,1)-f(x-1,2)\bigr]\ge0.
\label{eq:Q-closure-K3-y0-1222}
\end{align}
The first bracket is nonnegative by \eqref{eq:multimodular-2} for $f$ with $y=1$, evaluated at $x-1$, and the second
bracket is nonnegative by the selection condition associated with $a_{01}=a(x,1)=2$.

\item Finally, if $(a_{00},a_{10},a_{01},a_{11})=(2,2,2,2)$, then
\eqref{eq:Q-closure-K3-expanded} becomes
\begin{equation}
f(x-2,2)+f(x,2)-2f(x-1,2)\ge0,
\label{eq:Q-closure-K3-y0-2222}
\end{equation}
which follows from the convexity property of $f(x,y)$ in $x$ in \eqref{eq:convex-x}.
\end{itemize}

We now consider the case $y=1$.  Then
\begin{equation}
a_{01}=a(x,2)=2,
\qquad
a_{11}=a(x+1,2)=2,
\label{eq:Q-closure-K3-y1-fixed}
\end{equation}
and the possible pairs $(a_{00},a_{10})$ are
\begin{equation}
(1,1),\qquad (1,2),\qquad (2,2).
\label{eq:Q-closure-K3-y1-pairs}
\end{equation}
\begin{itemize}
\item If $(a_{00},a_{10})=(1,1)$, then \eqref{eq:Q-closure-K3-expanded} becomes
\begin{equation}
f(x,1)+f(x+1,2)-f(x+1,1)-f(x,2)\ge0,
\label{eq:Q-closure-K3-y1-11}
\end{equation}
which is \eqref{eq:multimodular-2} for $f$ with $y=1$.

\item 
If $(a_{00},a_{10})=(1,2)$, then \eqref{eq:Q-closure-K3-expanded} becomes
\begin{align}
& f(x,1)+f(x+1,2)-2f(x,2) \notag\\
&\quad =
\bigl[f(x,1)+f(x+1,2)-f(x+1,1)-f(x,2)\bigr]
+\bigl[f(x+1,1)-f(x,2)\bigr]\ge0.
\label{eq:Q-closure-K3-y1-12}
\end{align}
The first bracket is nonnegative by \eqref{eq:multimodular-2} for $f$ with $y=1$, and the second bracket is nonnegative
by the selection condition associated with $a_{10}=a(x+1,1)=2$.

\item If $(a_{00},a_{10})=(2,2)$, then \eqref{eq:Q-closure-K3-expanded} becomes
\begin{equation}
f(x-1,2)+f(x+1,2)-2f(x,2)\ge0,
\label{eq:Q-closure-K3-y1-22}
\end{equation}
which follows from the convexity property of $f(x,y)$ in $x$ in \eqref{eq:convex-x}.
\end{itemize}

This completes the proof that $\bar f$ satisfies \eqref{eq:multimodular-2}.

\paragraph{Proving $\bar f$ satisfies \eqref{eq:mono-y}.}
The desired inequality is
\begin{equation}
    \bar f(x,y+1)-\bar f(x,y)\ge0,
    \qquad y=0,1,\; x\ge 0.                                      \label{eq:bar-K4-desired}
\end{equation}
The minimizers involved are
\begin{equation}
    a_0=a(x,y),\qquad a_1=a(x,y+1),
\end{equation}
which are ordered as $a_0\le a_1$ since $a_0\le a(x+1,y)\le a_1$.
Then \eqref{eq:bar-K4-desired} is equivalent to
\begin{equation}
    f(x+y+1-a_1,a_1)-f(x+y-a_0,a_0)\ge0,
    \qquad y=0,1,\; x\ge 0.                                      \label{eq:bar-K4-desired-equiv}
\end{equation}
\begin{itemize}
\item For $y=0$, the possible values for $(a_0,a_1)$ are $(0,1),(0,2),(1,1),(1,2),(2,2)$.  
For $(a_0,a_1)=(0,1)$, the inequality \eqref{eq:bar-K4-desired-equiv} is $f(x,1)-f(x,0)\ge0$, which follows from \eqref{eq:mono-y}.
For $(a_0,a_1)=(0,2)$, the inequality \eqref{eq:bar-K4-desired-equiv} is $f(x-1,2)-f(x,0)\ge 0$, which has been proven in \eqref{eq:K1-skip-02}.  
For $(a_0,a_1)=(1,1)$, the inequality \eqref{eq:bar-K4-desired-equiv} is $f(x,1)-f(x-1,1)\ge0$, which follows from \eqref{eq:mono-x}.  
For $(a_0,a_1)=(1,2)$, the inequality \eqref{eq:bar-K4-desired-equiv} is $f(x-1,2)-f(x-1,1)\ge0$, which follows from \eqref{eq:mono-y}.  
For $(a_0,a_1)=(2,2)$, the inequality \eqref{eq:bar-K4-desired-equiv} is $f(x-1,2)-f(x-2,2)\ge0$, which follows from \eqref{eq:mono-x}.

\item For $y=1$, the possible values for $(a_0,a_1)$ are $(1,2)$ and $(2,2)$, which have been covered when checking the case where $y=0$.
\end{itemize}
This completes the proof that $\bar f$ satisfies \eqref{eq:mono-y}.

\paragraph{Proving $\bar f$ satisfies \eqref{eq:convex-y-fix-x}.}
The desired inequality is
\begin{equation}
    \bar f(x,0)+\bar f(x,2)-2\bar f(x,1)\ge0,\qquad x\ge 0.           \label{eq:bar-K5-desired}
\end{equation}
The minimizers involved are
\begin{equation}
    a_0=a(x,0),\qquad a_1=a(x,1),\qquad a_2=a(x,2)=2,         \label{eq:K5-selectors}
\end{equation}
which are ordered as $a_0\le a_1\le a_2$ since $a_0=a(x,0)\le a(x+1,0)\le a(x,1)=a_1\le a_2=2$.
Then \eqref{eq:bar-K5-desired} is equivalent to
\begin{equation}
    f(x-a_0,a_0)+f(x,2)-2 f(x+1-a_1,a_1)\ge0,\qquad x\ge 0.  \label{eq:bar-K5-desired-equiv}
\end{equation}
There are five possibilities for $(a_0,a_1,a_2)$, which are $(0,1,2),(0,2,2),(1,1,2),(1,2,2),(2,2,2)$.
\begin{itemize}
\item If $(a_0,a_1,a_2)=(0,1,2)$, then \eqref{eq:bar-K5-desired-equiv} is $f(x,0)+f(x,2)-2 f(x,1)\ge0$, which is \eqref{eq:convex-y-fix-x} for $f$.

\item If $(a_0,a_1,a_2)=(0,2,2)$, then \eqref{eq:bar-K5-desired-equiv} becomes
\begin{align}
&f(x,0)+f(x,2)-2f(x-1,2)\notag\\
&\quad =[f(x,0)+f(x,2)-2f(x,1)]
      +2[f(x,1)-f(x-1,2)]\ge0,         \label{eq:K5-pattern-022}
\end{align}
where the first bracket is nonnegative due to \eqref{eq:convex-y-fix-x}, and the second bracket is nonnegative due to the selection condition $a_1=a(x,1)=2$.

\item If $(a_0,a_1,a_2)=(1,1,2)$, then \eqref{eq:bar-K5-desired-equiv} becomes
\begin{align}
&f(x-1,1)+f(x,2)-2f(x,1)\notag\\
&\quad =[f(x-1,1)+f(x,2)-f(x,1)-f(x-1,2)]
      +[f(x-1,2)-f(x,1)]\ge0,                            \label{eq:K5-pattern-112}
\end{align}
where the first bracket is nonnegative due to \eqref{eq:multimodular-2}, and the second bracket is nonnegative due to the selection condition $a_1=a(x,1)=1$.

\item If $(a_0,a_1,a_2)=(1,2,2)$, then \eqref{eq:bar-K5-desired-equiv} becomes
\begin{align}
&f(x-1,1)+f(x,2)-2f(x-1,2)\notag\\
&\quad =[f(x-1,1)+f(x,2)-f(x,1)-f(x-1,2)]
+[f(x,1)-f(x-1,2)]\ge0,                \label{eq:K5-pattern-122}
\end{align}
where the first bracket is nonnegative due to \eqref{eq:multimodular-2}, and the second bracket is nonnegative due to the selection condition $a_1=a(x,1)=2$.

\item Finally, if $(a_0,a_1,a_2)=(2,2,2)$, then \eqref{eq:bar-K5-desired-equiv} becomes
\begin{equation}
    f(x-2,2)+f(x,2)-2f(x-1,2)\ge 0,
\end{equation}
which follows from the convexity property of $f(x,y)$ in $x$ in \eqref{eq:convex-x}.
\end{itemize}
This completes the proof that $\bar f$ satisfies \eqref{eq:convex-y-fix-x}.

Now we have proved that $\bar f =\Qop f$ satisfies \eqref{eq:mono-x}--\eqref{eq:convex-y-fix-total}, and thus $\bar f \in \K$, which completes the proof of Lemma~\ref{lem:qclosure}.
\ifarxiv

\else
\Halmos
\fi
\end{myproof}

\subsection{Proof of Lemma~\ref{lem:pclosure}}
\label{sec:proof-lem-pclosure}

\begin{myproof}
Consider any $f\in \K$.
We first decompose the overall event operator $\Pop$ into three separate event operators that correspond to the events of a job arrival, a job departure from the fast server, and a job departure from the slow servers, respectively:
\begin{itemize}
\item \emph{Arrival operator $\Aop$}:
\begin{equation}
(\Aop f)(x,y)=f(x+1,y),
\qquad y=0,1,2,\ x\ge 0,
\label{eq:event-arrival-operator}
\end{equation}

\item \emph{Fast departure operator $\DFop$}:
\begin{equation}
\begin{aligned}
(\DFop f)(x,y)&=f((x-1)^+,y)\\
&=
\begin{cases}
    f(0,y), \quad&\text{if }x=0,\\
    f(x-1,y),\quad&\text{if }x\ge 1,
\end{cases}
\qquad y=0,1,2,
\end{aligned}
\label{eq:event-fast-operator}
\end{equation}

\item \emph{Slow departure operator $\DSop$}:
\begin{equation}\label{eq:event-slow-operator}
\begin{aligned}
(\DSop f)(x,0)&=2f(x,0),\qquad &&x\ge 0,\\
(\DSop f)(x,1)&=f(x,0)+f(x,1),\qquad &&x\ge 0,\\
(\DSop f)(x,2)&=2f(x,1),\qquad &&x\ge 0.
\end{aligned}
\end{equation}
\end{itemize}
Then
\begin{equation}
    \Pop f=\lambda\Aop f+\mu_1\DFop f
    +\mu_2\DSop f.
\label{eq:event-operator-decomposition}
\end{equation}
Since \(\mathcal K\) is closed under nonnegative linear combinations, it suffices to show that \(\Aop f \in \K,\DFop f\in\K,\DSop f\in \K\).

We next rewrite the cone properties \eqref{eq:mono-x}--\eqref{eq:convex-y-fix-total} into forms convenient for this proof.
For any function \(h:\mathbb Z_+\times\{0,1,2\}\to\mathbb R\), define
\begin{align}
L_1^h(x,y)
&=h(x+1,y)-h(x,y),
&&y=0,1,2,\ x\ge0,                                      \label{eq:event-L1}\\
L_2^h(x,y)
&=h(x+2,y)+h(x,y+1)-h(x+1,y)-h(x+1,y+1),
&&y=0,1,\ x\ge0,                                      \label{eq:event-L2}\\
L_3^h(x,y)
&=h(x,y)+h(x+1,y+1)-h(x+1,y)-h(x,y+1),
&&y=0,1,\ x\ge0,                                      \label{eq:event-L3}\\
L_4^h(x,y)
&=h(x,y+1)-h(x,y),
&&y=0,1,\ x\ge0,                                      \label{eq:event-L4}\\
L_5^h(x)
&=h(x,0)+h(x,2)-2h(x,1),
&&x\ge0,                                               \label{eq:event-L5}\\
L_6^h(x)
&=h(x+2,0)+h(x,2)-2h(x+1,1),
&&x\ge0.                                               \label{eq:event-L6}
\end{align}
Thus \(h\in\mathcal K\) if and only if $L_i^h\ge 0,i=1,2,3,4,5,6$, which correspond to \eqref{eq:mono-x}--\eqref{eq:convex-y-fix-total} for $h$.
We also use the convexity property in $x$ given in \eqref{eq:convex-x}, which can be written as
\begin{equation}
L_7^h(x,y)
:=h(x+2,y)+h(x,y)-2h(x+1,y)\ge0,
\qquad y=0,1,2,\ x\ge0.
\label{eq:event-L7}
\end{equation}
We have shown that this follows from \eqref{eq:multimodular-1} and \eqref{eq:multimodular-2} for $h$.

We now prove that $\K$ is closed under the arrival operator $\Aop$.
Let \(g=\Aop f\).  Then, for
\(i=1,\dots,6\),
\begin{equation}
        L_i^g(x,\cdot)=L_i^f(x+1,\cdot),
\label{eq:event-arrival-transport}
\end{equation}
with the obvious interpretation for \(L_5^f,L_5^g,L_6^f,L_6^g\), which do not have a \(y\)-argument.
Since $L_i^f(x,y)\ge 0$ for all $x\ge 0$, we have $L_i^f(x+1,\cdot)\ge 0$ for all $x\ge 0$.
Therefore, if \(f\in\mathcal K\), then \(g=\Aop f\in\mathcal K\).

We next prove that $\K$ is closed under the fast departure operator $\DFop$.
Let \(g=\DFop f\).
Using the definition of $\DFop$ in \eqref{eq:event-fast-operator} and noting that $g(0,y)=g(1,y)=f(0,y)$ for all $y=0,1,2$, we have the following.
\begin{itemize}
\item For \eqref{eq:mono-x},
\begin{equation}
L_1^g(x,y)=
\begin{cases}
0, & x=0,\\
L_1^f(x-1,y), & x\ge1,
\end{cases}
\qquad y=0,1,2.
\label{eq:event-fast-L1}
\end{equation}

\item For \eqref{eq:multimodular-1},
\begin{equation}
L_2^g(x,y)=
\begin{cases}
L_1^f(0,y), & x=0,\\
L_2^f(x-1,y), & x\ge1,
\end{cases}
\qquad y=0,1.
\label{eq:event-fast-L2}
\end{equation}

\item For \eqref{eq:multimodular-2},
\begin{equation}
L_3^g(x,y)=
\begin{cases}
0, & x=0,\\
L_3^f(x-1,y), & x\ge1,
\end{cases}
\qquad y=0,1.
\label{eq:event-fast-L3}
\end{equation}

\item For \eqref{eq:mono-y},
\begin{equation}
L_4^g(x,y)=L_4^f((x-1)^+,y),
\qquad x\ge 0,\ y=0,1.
\label{eq:event-fast-L4}
\end{equation}

\item For \eqref{eq:convex-y-fix-x},
\begin{equation}
L_5^g(x)=L_5^f((x-1)^+),
\qquad x\ge0.
\label{eq:event-fast-L5}
\end{equation}

\item For \eqref{eq:convex-y-fix-total},
\begin{equation}
L_6^g(x)=
\begin{cases}
L_1^f(0,0)+L_5^f(0), & x=0,\\
L_6^f(x-1), & x\ge1.
\end{cases}
\label{eq:event-fast-L6}
\end{equation}
\end{itemize}
Therefore, we can see that $L_i^g\ge 0$ for $i=1,2,\dots,6$, which implies that
\(g=\DFop f\in\mathcal K\).

Finally we prove that $\K$ is closed under the slow completion operator $\DSop$.
Let \(g=\DSop f\).
\begin{itemize}
\item For \eqref{eq:mono-x},
direct substitution gives
\begin{align}
L_1^g(x,0)
&=g(x+1,0)-g(x,0) \notag\\
&=2f(x+1,0)-2f(x,0) \notag\\
&=2L_1^f(x,0),                                           \label{eq:event-slow-L1-0}\\
L_1^g(x,1)
&=g(x+1,1)-g(x,1) \notag\\
&=\{f(x+1,0)+f(x+1,1)\}-\{f(x,0)+f(x,1)\} \notag\\
&=L_1^f(x,0)+L_1^f(x,1),                                 \label{eq:event-slow-L1-1}\\
L_1^g(x,2)
&=g(x+1,2)-g(x,2) \notag\\
&=2f(x+1,1)-2f(x,1) \notag\\
&=2L_1^f(x,1).                                           \label{eq:event-slow-L1-2}
\end{align}

\item For \eqref{eq:multimodular-1}, for all $x\ge 0$,
\begin{align}
L_2^g(x,0)
&=g(x+2,0)+g(x,1)-g(x+1,0)-g(x+1,1) \notag\\
&=2f(x+2,0)+\{f(x,0)+f(x,1)\}
  -2f(x+1,0)-\{f(x+1,0)+f(x+1,1)\} \notag\\
&=\bigl[f(x+2,0)+f(x,1)-f(x+1,0)-f(x+1,1)\bigr] \notag\\
&\quad
 +\bigl[f(x+2,0)+f(x,0)-2f(x+1,0)\bigr] \notag\\
&=L_2^f(x,0)+L_7^f(x,0),                                  \label{eq:event-slow-L2-0}
\\
L_2^g(x,1)
&=g(x+2,1)+g(x,2)-g(x+1,1)-g(x+1,2) \notag\\
&=\{f(x+2,0)+f(x+2,1)\}+2f(x,1) \notag\\
&\quad
 -\{f(x+1,0)+f(x+1,1)\}-2f(x+1,1) \notag\\
&=\bigl[f(x+2,0)+f(x,1)-f(x+1,0)-f(x+1,1)\bigr] \notag\\
&\quad
 +\bigl[f(x+2,1)+f(x,1)-2f(x+1,1)\bigr] \notag\\
&=L_2^f(x,0)+L_7^f(x,1).                                  \label{eq:event-slow-L2-1}
\end{align}

\item For \eqref{eq:multimodular-2},
\begin{align}
L_3^g(x,0)
&=g(x,0)+g(x+1,1)-g(x+1,0)-g(x,1) \notag\\
&=2f(x,0)+\{f(x+1,0)+f(x+1,1)\}
  -2f(x+1,0)-\{f(x,0)+f(x,1)\} \notag\\
&=f(x,0)+f(x+1,1)-f(x+1,0)-f(x,1) \notag\\
&=L_3^f(x,0),                                               \label{eq:event-slow-L3-0}\\
L_3^g(x,1)
&=g(x,1)+g(x+1,2)-g(x+1,1)-g(x,2) \notag\\
&=\{f(x,0)+f(x,1)\}+2f(x+1,1) \notag\\
&\quad
 -\{f(x+1,0)+f(x+1,1)\}-2f(x,1) \notag\\
&=f(x,0)+f(x+1,1)-f(x+1,0)-f(x,1) \notag\\
&=L_3^f(x,0).                                               \label{eq:event-slow-L3-1}
\end{align}

\item For \eqref{eq:mono-y},
\begin{align}
L_4^g(x,0)
&=g(x,1)-g(x,0) \notag\\
&=\{f(x,0)+f(x,1)\}-2f(x,0) \notag\\
&=f(x,1)-f(x,0) \notag\\
&=L_4^f(x,0),                                               \label{eq:event-slow-L4-0}\\
L_4^g(x,1)
&=g(x,2)-g(x,1) \notag\\
&=2f(x,1)-\{f(x,0)+f(x,1)\} \notag\\
&=f(x,1)-f(x,0) \notag\\
&=L_4^f(x,0).                                               \label{eq:event-slow-L4-1}
\end{align}

\item For \eqref{eq:convex-y-fix-x},
\begin{align}
L_5^g(x)
&=g(x,0)+g(x,2)-2g(x,1) \notag\\
&=2f(x,0)+2f(x,1)-2\{f(x,0)+f(x,1)\} \notag\\
&=0.                                                        \label{eq:event-slow-L5}
\end{align}

\item For \eqref{eq:convex-y-fix-total},
\begin{align}
L_6^g(x)
&=g(x+2,0)+g(x,2)-2g(x+1,1) \notag\\
&=2f(x+2,0)+2f(x,1)-2\{f(x+1,0)+f(x+1,1)\} \notag\\
&=2\bigl[f(x+2,0)+f(x,1)-f(x+1,0)-f(x+1,1)\bigr] \notag\\
&=2L_2^f(x,0).                                             \label{eq:event-slow-L6}
\end{align}
\end{itemize}
Again, we can see that $L_i^g\ge 0$ for $i=1,2,\dots,6$, which implies that
\(g=\DFop f\in\mathcal K\).

We have now shown that $\K$ is closed under each of
\(\Aop,\DFop,\DSop\),
which completes the proof of Lemma~\ref{lem:pclosure}.
\ifarxiv

\else
\Halmos
\fi
\end{myproof}

\subsection{Proof of Lemma~\ref{lem:vi-consistency}}
\label{sec:proof-lem-vi-consistency}
\begin{myproof}
We first prove that, for any $f\in\K$, after one iteration with $\Bellmanmodified$, the function $g = \Bellmanmodified f$ satisfies
\begin{equation}\label{eq:no-move-10-to-01}
    g(1,0) \le g(0,1),
\end{equation}
\begin{equation}\label{eq:no-move-11-to-02}
    g(1,1) \le g(0,2).
\end{equation}
That is, if $g$ is interpreted as a value function, then it is of lower cost to not move a job from the fast server to the slow servers.

Before giving the proof of \eqref{eq:no-move-10-to-01} and \eqref{eq:no-move-11-to-02}, we first show why they imply the desired result.  Note that \eqref{eq:no-move-10-to-01} and \eqref{eq:no-move-11-to-02} imply that
\begin{align}
    (\Qop g)(1,0)
    &= \min_{0\le k\le 1} g(1-k,k) \label{eq:tQ-at-10-min}\\
    &= g(1,0) \label{eq:tQ-at-10-g10}\\
    &= (\Qtrue g)(1,0), \label{eq:tQ-at-10-Q}
\end{align}
\begin{align}
    (\Qop g)(1,1)
    &= \min_{1\le k\le 2} g(2-k,k) \label{eq:tQ-at-11-min}\\
    &= g(1,1) \label{eq:tQ-at-11-g11}\\
    &= (\Qtrue g)(1,1), \label{eq:tQ-at-11-Q}
\end{align}
and
\begin{align}
    (\Qop g)(2,0)
    &= \min_{0\le k\le 2} g(2-k,k) \label{eq:tQ-at-20-min}\\
    &= \min_{0\le k\le 1} g(2-k,k) \label{eq:tQ-at-20-reduced}\\
    &= (\Qtrue g)(2,0). \label{eq:tQ-at-20-Q}
\end{align}
Since we also know that by definition,
$(\Qop g)(x,y) = (\Qtrue g)(x,y)$ for all $(x,y)$ that are not $(1,0)$, $(1,1)$, and $(2,0)$, we have $\Qop g = \Qtrue g$.
Thus, $\Bellmanmodified g = \Bellman g$, i.e.,
\begin{equation}\label{eq:tTg-equals-Tg}
    \Bellmanmodified (\Bellmanmodified f) = \Bellman (\Bellmanmodified f).
\end{equation}
Consequently, when we perform the value iteration with $\Bellmanmodified$ given in \eqref{eq:vi-modified} starting from $\widetilde V_{\alpha}^{(0)}\in\K$, we have
\begin{align}
    \widetilde V_{\alpha}^{(\infty)}
    &= \lim_{n\to\infty} \widetilde V_{\alpha}^{(n+1)} \label{eq:tilde-limit-shift}\\
    &= \lim_{n\to\infty} \Bellmanmodified^{n+1}\widetilde V_{\alpha}^{(0)} \label{eq:tilde-iterate}\\
    &= \lim_{n\to\infty} \Bellman^n\bigl(\Bellmanmodified \widetilde V_{\alpha}^{(0)}\bigr) \label{eq:tilde-to-original-iterate}\\
    &= V_{\alpha}^{(\infty)}, \label{eq:tilde-limit-equals-original}
\end{align}
as desired.

It remains to prove \eqref{eq:no-move-10-to-01} and \eqref{eq:no-move-11-to-02}.  We know that
\begin{equation}\label{eq:g-bellman-form}
    g(x,y) = c(x,y) + \frac{\rho_{\alpha}}{\gamma}(\Pop\Qop f)(x,y).
\end{equation}
Let $\bar f = \Qop f.$
Then
\begin{align}
    g(1,0)-g(0,1)
    &= \frac{\rho_{\alpha}}{\gamma}\bigl((\Pop\bar f)(1,0)-(\Pop\bar f)(0,1)\bigr) \label{eq:g10-g01-P}\\
    &= \frac{\rho_{\alpha}}{\gamma}\bigl(
        \lambda(\bar f(2,0)-\bar f(1,1))
        + \mu_1(\bar f(0,0)-\bar f(0,1))  \notag\\
    &\qquad\qquad\qquad
        + \mu_2(2\bar f(1,0)-\bar f(0,0)-\bar f(0,1))
       \bigr) \label{eq:g10-g01-expanded}\\
    &= \frac{\rho_{\alpha}}{\gamma}\bigl(
        \lambda(\bar f(2,0)-\bar f(1,1))
        + (\mu_1-\mu_2)(\bar f(0,0)-\bar f(0,1)) \notag\\
    &\qquad\qquad\qquad
        + 2\mu_2(\bar f(1,0)-\bar f(0,1))
       \bigr), \label{eq:g10-g01-grouped}
\end{align}
and
\begin{align}
    g(1,1)-g(0,2)
    &= \frac{\rho_{\alpha}}{\gamma}\bigl((\Pop\bar f)(1,1)-(\Pop\bar f)(0,2)\bigr) \label{eq:g11-g02-P}\\
    &= \frac{\rho_{\alpha}}{\gamma}\bigl(
        \lambda(\bar f(2,1)-\bar f(1,2))
        + \mu_1(\bar f(0,1)-\bar f(0,2)) \notag\\
    &\qquad\qquad\qquad
        + \mu_2(\bar f(1,0)+\bar f(1,1)-2\bar f(0,1))
       \bigr) \label{eq:g11-g02-expanded}\\
    &= \frac{\rho_{\alpha}}{\gamma}\bigl(
        \lambda(\bar f(2,1)-\bar f(1,2))
        + (\mu_1-\mu_2)(\bar f(0,1)-\bar f(0,2)) \notag\\
    &\qquad\qquad\qquad
        + \mu_2(\bar f(1,0)-\bar f(0,1)+\bar f(1,1)-\bar f(0,2))
       \bigr). \label{eq:g11-g02-grouped}
\end{align}
By the definition of $\Qop$, the function $\bar f=\Qop f$ satisfies
\begin{equation}\label{eq:fbar-20-leq-11}
    \bar f(2,0)-\bar f(1,1) \le 0,
\end{equation}
\begin{equation}\label{eq:fbar-10-leq-01}
    \bar f(1,0)-\bar f(0,1) \le 0,
\end{equation}
\begin{equation}\label{eq:fbar-21-leq-12}
    \bar f(2,1)-\bar f(1,2) \le 0,
\end{equation}
\begin{equation}\label{eq:fbar-11-leq-02}
    \bar f(1,1)-\bar f(0,2) \le 0.
\end{equation}
We also know that $\bar f\in\K$ by Lemma~\ref{lem:qclosure}, so \eqref{eq:mono-y} for $\bar f$ implies
\begin{equation}\label{eq:fbar-k4-y-monotone-at-0}
    \bar f(0,0) \le \bar f(0,1) \le \bar f(0,2).
\end{equation}
Combining \eqref{eq:g10-g01-grouped} with \eqref{eq:fbar-20-leq-11}, \eqref{eq:fbar-10-leq-01}, and \eqref{eq:fbar-k4-y-monotone-at-0} gives
\begin{equation}\label{eq:g10-leq-g01-conclusion}
    g(1,0) \le g(0,1).
\end{equation}
Combining \eqref{eq:g11-g02-grouped} with \eqref{eq:fbar-10-leq-01}, \eqref{eq:fbar-21-leq-12}, \eqref{eq:fbar-11-leq-02}, and \eqref{eq:fbar-k4-y-monotone-at-0} gives
\begin{equation}\label{eq:g11-leq-g02-conclusion}
    g(1,1) \le g(0,2).
\end{equation}
This completes the proof of Lemma~\ref{lem:vi-consistency}.
\ifarxiv

\else
\Halmos
\fi
\end{myproof}

\section{Proof for the Average-Cost Problem}\label{sec:average}

\newcommand{\sfull}{\bar{s}}
\newcommand{\Sfull}{\bar{S}}

In this section, we prove the result for the average-cost problem in Theorem~\ref{thm:main} by taking the average-cost problem as the limit of the discounted-cost problem when the discount rate $\alpha\to 0$.
This approach is standard and has been used in queueing-control models (see, e.g., \citet{LinKum_84,Haj_84}).
Throughout this section, we assume the stability condition $\lambda<\mu_1+2\mu_2$.

For the average-cost problem, since it is not known a priori that the simplified state representation $(x,y)$ in Section~\ref{sec:state} is sufficient, we consider the original state representation.
To distinguish from the simplified state, we write the state as $\sfull=(q,z,y)$, where recall that $q$ is the number of queued jobs, $z$ is the number of jobs at the fast server, and $y$ is the number of jobs at the slow servers.
Under a policy $\pi$, the long-run average cost given initial state $\sfull$ is defined as
\begin{equation}
    g^{\pi}(\sfull)=\operatorname*{\lim\sup}\limits_{T\to\infty}\frac{1}{T}\E{\int_0^{T}\left(Q(t)+Z(t)+Y(t)\right)dt \given \Sfull(0)=\sfull},
\end{equation}
where $\Sfull(t)=(Q(t),Z(t),Y(t))$ is the post-decision state at time $t$.

Our proof consists of two steps.
First, we consider a restricted version of the average-cost problem in which policies are required to be based on the simplified state representation $(x,y)$ in Section~\ref{sec:state}, i.e., policies that never let the fast server idle if there are queued jobs at the decision time.
We show in Lemma~\ref{lem:discounted-to-average} that for this restricted version of the average-cost problem, the optimal relative value function is a limit of the optimal value functions for the discounted-cost problem, and thus retains the threshold structure for an optimal policy.
In the second step, we show in Lemma~\ref{lem:opt-full-state} that the threshold policy obtained from the restricted problem is also optimal for the original average-cost problem.

\begin{lemma}\label{lem:discounted-to-average}
Consider the average-cost problem restricted to policies that never let the fast server idle if there are queued jobs.
Then there exists a sequence $(\alpha_k)$ with $\alpha_k\downarrow0$, with an associated sequence $\rho_{\alpha_k}\uparrow1$, a scalar $g$, and a function $h$, such that, for every state $s$ and a reference state $s_0=(0,0)$,
\begin{equation}
    (1-\rho_{\alpha_k})V_{\alpha_k}^{(\infty)}(s)\to g,
    \qquad
    V_{\alpha_k}^{(\infty)}(s)-V_{\alpha_k}^{(\infty)}(s_0)\to h(s),\qquad k\to\infty.
    \label{eq:discounted-to-average-limit}
\end{equation}
Further, the pair \((g,h)\) satisfies the average-cost optimality equation
\begin{equation}
    g+h(s)
    =
    c(s)+\frac{1}{\gamma}\bigl(\Pop\Qtrue h\bigr)(s),
    \qquad s\in \Zp\times\{0,1,2\},
    \label{eq:average-cost-optimality-equation}
\end{equation}
and any policy attaining the minimum in the operator
\(\Qtrue\) in \eqref{eq:average-cost-optimality-equation} is average-cost optimal with average cost \(g\).  
Consequently, the function $h\in\K$, and thus the corresponding optimal policy $\piavgopt$ has the threshold structure in Theorem~\ref{thm:main}.
\end{lemma}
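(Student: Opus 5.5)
We plan to follow the standard vanishing-discount route, using Sennott-type conditions for countable-state MDPs with unbounded costs. The restricted problem is an MDP on the simplified state space $\Zp\times\{0,1,2\}$ with uniformized transition matrix $\frac1\gamma\Pop$ composed with the action operator $\Qtrue$. Its discounted optimal value is $V_\alpha^*=\Val/(\alpha+\gamma)$. In discrete time with discount factor $\rho_\alpha$, the relevant value is $\Val/\gamma$ up to the factor convention, which we track so that $(1-\rho_\alpha)\Val\to g$ is the correct normalization.

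First we verify Sennott's conditions. For (i), $(1-\rho_\alpha)\Val(s_0)$ is bounded: we exhibit a stable policy, e.g.\ ``always assign queued jobs to any idle server'' (restricted class, fast server first), which under $\lambda<\mu_1+2\mu_2$ is a positive recurrent birth–death-like chain with finite mean cost. A Lyapunov function $V(x,y)=(x+y)^2$ gives finite expected cost and finite expected cost-to-reach $s_0$ from every state, which is the standard bound $\Val(s)-\Val(s_0)\le M(s)$. For the lower bound $\Val(s)-\Val(s_0)\ge -N$, we use monotonicity from the cone: since $\Val\in\K$ (limit of iterates in $\K$ via Lemmas~\ref{lem:qclosure}--\ref{lem:vi-consistency}), \eqref{eq:mono-x} and \eqref{eq:mono-y} give $\Val(s)\ge\Val(0,0)$, so $N=0$. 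We also need $M(s)$ to be finite and to satisfy $\sum_{s'}P(s'\mid s,a)M(s')<\infty$; this follows since transitions are nearest-neighbor and $M$ grows polynomially.

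Next, we extract limits. The family $\{\Val(s)-\Val(s_0)\}_\alpha$ is bounded in $[0,M(s)]$ for each $s$, and the state space is countable. A diagonal argument then yields $\alpha_k\downarrow0$ such that $h_k(s)\to h(s)$ for every $s$ and $(1-\rho_{\alpha_k})\Val(s_0)\to g$. Because $\Val(s)-\Val(s_0)$ is bounded, the same $g$ is the limit at every $s$.

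To derive the optimality equation, we subtract $\Val(s_0)$ in $\Val=c+\frac{\rho_\alpha}{\gamma}\Pop\Qtrue\Val$ and write $\Val=h_\alpha+\Val(s_0)$. Since $\frac1\gamma\Pop$ is stochastic and $\Qtrue$ commutes with constants, this gives
\[
(1-\rho_\alpha)\Val(s_0)+h_\alpha(s)=c(s)+\tfrac{\rho_\alpha}{\gamma}(\Pop\Qtrue h_\alpha)(s).
\]
Each $\Pop$ involves finitely many states and $\Qtrue$ takes a minimum over finitely many actions, so we can pass to the limit exactly, obtaining \eqref{eq:average-cost-optimality-equation} with equality. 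The finite transition support makes this easier than Sennott's usual inequality version. Optimality of minimizing policies then follows from the standard verification argument: iterate the equation along any policy and use $h\ge0$ to get $g\le g^\pi$. For the minimizer we need $\frac1T\E{h(S_T)}\to0$, which we obtain from $h\le M$ together with the positive recurrence/Lyapunov bound under the threshold policy. This step is the main obstacle, because it requires controlling moments of the minimizing policy. Our plan is to compare it with the stable reference policy, or to invoke Sennott's theorem, which only needs conditions (i)–(iii) and yields average optimality of any policy attaining the minimum. Finally, $h$ is a pointwise limit of $\Val-\Val(s_0)$. These functions lie in $\K$ because $\K$ contains constants and is a convex cone, and $\K$ is closed under pointwise limits, so $h\in\K$. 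Lemma~\ref{lem:threshold}, whose proof used only membership in $\K$ and the minimizer formula, then applies verbatim with $h$ in place of $\Val$ and gives the threshold structure of $\piavgopt$.
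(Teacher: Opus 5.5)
Your overall route matches the paper's. You verify Sennott's conditions with $N=0$ from \eqref{eq:mono-x}, \eqref{eq:mono-y} and $\Val\in\K$, and you bound $\Val(s)-\Val(s_0)$ by the expected cost to reach $s_0$ under the work-conserving policy. You use finite successor sets, and you get $h\in\K$ from closure under constants and pointwise limits. Your diagonal extraction is correct, and so is the limit passage that gives \eqref{eq:average-cost-optimality-equation} with equality. The gap is in your self-contained verification step, where the two directions are swapped. Iterating the optimality equation along an arbitrary policy $\pi$ gives
\[
Tg+h(s)\le \mathbb{E}^{\pi}[\textstyle\sum_{t<T}c(S_t)]+\mathbb{E}^{\pi}[h(S_T)],
\]
so $h\ge0$ is of no use there. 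Concluding $g\le g^{\pi}$ this way would require $\mathbb{E}^{\pi}[h(S_T)]/T\to0$, at least along a subsequence, for every policy with $g^{\pi}<\infty$. You cannot control that, because $\pi$ is arbitrary and $h$ typically grows quadratically. Along the minimizer, by contrast, the same relation holds with equality. There $h\ge0$ immediately gives $\frac1T\mathbb{E}^{\piavgopt}[\sum_{t<T}c(S_t)]\le g+h(s)/T$, i.e.\ $g^{\piavgopt}\le g$. No moment bound or positive recurrence of the threshold policy is needed, so the ``main obstacle'' you identify is not actually an obstacle.

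The missing inequality, $g\le g^{\pi}$ for every $\pi$, must come from the discounted problem. By discounted optimality, $(1-\rho_{\alpha_k})V^{(\infty)}_{\alpha_k}(s)\le(1-\rho_{\alpha_k})\sum_t\rho_{\alpha_k}^t\,\mathbb{E}^{\pi}[c(S_t)]$. The Abelian inequality $\limsup_{\rho\uparrow1}(1-\rho)\sum_t\rho^t a_t\le\limsup_T\frac1T\sum_{t<T}a_t$ for $a_t\ge0$ then gives $g\le g^{\pi}$; the paper uses the continuous-time analogue of this step in Appendix~\ref{sec:proof-lem-opt-full-state}. Sennott's theorem packages exactly this. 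So your fallback of invoking it closes the gap, and that is what the paper does, after recasting $\Pop\Qtrue$ as a standard-form MDP with event-indexed actions. Your hand-made argument also works once the two directions are paired this way.

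One minor point: under the work-conserving policy, the drift of your Lyapunov function $(x+y)^2$ is negative only on $\{(x,2):x\ge1\}$. On the states with $x\ge2$, $y\in\{0,1\}$ it need not be; for $y=0$ it is positive whenever $\lambda\ge\mu_1$. Those states enter the recurrent class in one step, so treat them separately as the paper does.
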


The proof of \eqref{eq:discounted-to-average-limit} and \eqref{eq:average-cost-optimality-equation} in Lemma~\ref{lem:discounted-to-average} utilizes the general results in \citet{Sen_89}.
It is somewhat routine but we provide it in Appendix~\ref{sec:proof-lem-discounted-to-average}.
Once we have the limit result $V_{\alpha_k}^{(\infty)}(s)-V_{\alpha_k}^{(\infty)}(s_0)\to h(s)$ as $k\to \infty$ in \eqref{eq:discounted-to-average-limit}, noting that the cone $\K$ is closed under subtracting constants and pointwise limit, it follows that $h\in\K$.

\begin{lemma}\label{lem:opt-full-state}
The policy $\piavgopt$ in Lemma~\ref{lem:discounted-to-average} is optimal for the original average-cost problem.
Therefore, there exists an optimal policy for the original average-cost problem that has the threshold structure in Theorem~\ref{thm:main}.
\end{lemma}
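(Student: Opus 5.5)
We will show that $\piavgopt$ is optimal among all policies on the full state $\sfull=(q,z,y)$ by verifying an average-cost optimality inequality for the unrestricted problem, using the pair $(g,h)$ from Lemma~\ref{lem:discounted-to-average}. Extend $h$ to full states by $\bar h(q,z,y)=h(q+z,y)$. Since $h$ is determined by $q+z$, a post-decision state with an idle fast server and $q\ge1$ queued jobs has the same $\bar h$-value as the state in which one queued job has been moved to the fast server. The issue is the dynamics: with the fast server idle, the $\mu_1$ event is a dummy self-loop, so these two states are not equivalent. We will therefore write the full-state Bellman operator with uniformization rate $\gamma$ and show that
\[
g+\bar h(\sfull)\le c(\sfull)+\tfrac1\gamma(\bar\Pop\,\bar\Qtrue\,\bar h)(\sfull)
\]
holds for every full post-decision state $\sfull$, including those with $z=0$ and $q\ge1$. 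Here $\bar\Qtrue$ minimizes over all feasible assignments, including leaving the fast server idle. We will also show equality along the states visited by $\piavgopt$, where the relation is exactly \eqref{eq:average-cost-optimality-equation}.

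The key step is the inequality at states with $z=0$ and $q\ge1$. Compare the transition from $(q,0,y)$ with that from $(q-1,1,y)$, which has the same $x=q$. For arrivals and slow departures the two transitions agree after re-optimizing. For the $\mu_1$ event, the idle-server state stays at $x$, while the busy-server state moves to $x-1$. By \eqref{eq:mono-x} for $h$, and since $\Qtrue$ preserves monotonicity in the relevant sense, we get $(\Qtrue h)(x-1,\cdot)\le (\Qtrue h)(x,\cdot)$. Hence the right-hand side at the idle state is at least the right-hand side at $(x,y)$, which equals $g+h(x,y)$. We must also check that the minimization in $\bar\Qtrue$ never prefers leaving the fast server idle. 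The same monotonicity comparison gives this, mirroring Lemma~\ref{lem:simplified-state}, and we will make it precise by direct comparison of the one-step expressions using \eqref{eq:mono-x} and \eqref{eq:mono-y}. If a direct monotonicity statement for $\Qtrue h$ in $x$ is not immediate, we can instead use Lemma~\ref{lem:vi-consistency} to replace $\Qtrue h$ by $\Qop h$. That replacement is valid because $h$ is a limit of functions of the form $\Bellmanmodified f$ with $f\in\K$, and $\Qop h\in\K$ by Lemma~\ref{lem:qclosure}, so $\Qop h$ satisfies \eqref{eq:mono-x}.

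With the optimality inequality in hand, we apply the standard verification argument. For any policy $\pi$, Dynkin's formula for the uniformized chain gives
\[
\E{\bar h(\Sfull(T))}-\bar h(\sfull)\ge gT-\E{\int_0^T c\,dt}.
\]
Dividing by $T$ yields $g^\pi(\sfull)\ge g$, provided that $\liminf_{T}\E{\bar h(\Sfull(T))}/T\le 0$, at least for policies with finite average cost; policies with infinite average cost are trivially no better. Since $\piavgopt$ attains equality and is stable under $\lambda<\mu_1+2\mu_2$, it achieves $g$.

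The main obstacle is this growth condition. We plan to bound $h$ by showing $|h(x,y)|\le C(1+x^2)$, using the bounds from Sennott's conditions in Appendix~\ref{sec:proof-lem-discounted-to-average}: $h$ is bounded below, and its growth is controlled by the expected cost to reach $s_0$ under a stable policy, which is quadratic in $x$. Because $h$ is bounded below, the relevant limit needs only a lower bound on $\bar h$, so the term $-\E{\bar h(\Sfull(T))}/T$ is bounded above by $C/T\to0$. This avoids any moment condition on arbitrary policies.
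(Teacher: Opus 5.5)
The gap is in your verification step, exactly at the obstacle you flagged: the bound you use there points the wrong way. The full-state optimality inequality itself is essentially right with your fallback. From $\Val=\Bellmanmodified\Val$ and $\Val\in\K$ you get $\Val(1,0)\le\Val(0,1)$ and $\Val(1,1)\le\Val(0,2)$. These pass to $h$, so $\Qop h=\Qtrue h$, and $\Qop h\in\K$ supplies the monotonicity you need at idle states.

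Your Dynkin inequality rearranges to
\[
\frac1T\E{\int_0^T c\,dt}\;\ge\; g+\frac{\bar h(\bar s)}{T}-\frac{\E{\bar h(\bar S(T))}}{T}.
\]
So concluding $g^\pi(\bar s)\ge g$ requires $\liminf_{T}\E{\bar h(\bar S(T))}/T\le 0$. That is an \emph{upper} bound on $\bar h$ along the trajectory of an arbitrary policy.

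A lower bound $h\ge -N$ only gives $-\E{\bar h(\bar S(T))}/T\le N/T$. This bounds the right-hand side from above and does not help here. It is the ingredient for the opposite inequality $g^{\piavgopt}\le g$ (via equality along $\piavgopt$), not for $g^\pi\ge g$.

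If you use the upper bound $h\le C(1+x^2)$ instead, you need $\liminf_T\E{(Q(T)+Z(T))^2}/T=0$ under $\pi$. Finite average cost does not imply this. Take a randomized policy that, independently for each $k$ and with probability $2^{-k}$, makes no assignments during $[2^k,2^{k+1})$, and is otherwise work-conserving. It has finite average cost, yet $\E{(Q(T)+Z(T))^2}\ge cT$ for all large $T$. Since $h$ grows quadratically, your inequality then says nothing about such a policy. So the claim that the argument avoids moment conditions on arbitrary policies is false, and $g^\pi\ge g$ for all admissible $\pi$ is not established.

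The paper avoids any growth or moment condition by going through discounted costs:
\begin{itemize}
\item For any policy, $\bar V^{\pi}_{\alpha}(\bar s)\ge\bar V^{*}_{\alpha}(\bar s)\ge V^{*}_{\alpha}(s)$. The last step is the fast-server interchange coupling of Lemma~\ref{lem:simplified-state}. Hence $(1-\rho_\alpha)\Val(s)\le\alpha\bar V^{\pi}_{\alpha}(\bar s)$.
\item Along $\alpha_k\downarrow0$, the left side tends to $g$ by Lemma~\ref{lem:discounted-to-average}.
\item The Abelian inequality $\limsup_{\alpha\downarrow0}\alpha\bar V^{\pi}_{\alpha}(\bar s)\le g^{\pi}(\bar s)$ holds for every policy, because Poisson arrivals make the cumulative expected cost $O(t^2)$.
\end{itemize}
Replacing your last paragraph with this argument closes the gap. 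Your full-state optimality inequality is then unnecessary, since $g^{\piavgopt}=g$ already follows from Lemma~\ref{lem:discounted-to-average}.
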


The proof of Lemma~\ref{lem:opt-full-state} is provided in Appendix~\ref{sec:proof-lem-opt-full-state}.

\bibliographystyle{plainnat}
\bibliography{refs-weina}

\appendix
\renewcommand{\theHsection}{appendix.\Alph{section}}

\section{Proof of Lemma~\ref{lem:simplified-state}}\label{sec:proof-lem-simplified-state}

\begin{myproof}
We prove that if a policy $\pi$ idles the fast server after a decision when there are jobs in the queue at the decision time, then it can be strictly improved and hence cannot be optimal.

Consider the system under such a policy $\pi$, referred to as the $\pi$-system. We will construct another policy $\tpi$ based on policy $\pi$ and show a strict improvement. We refer to the system under $\tpi$ as the $\tpi$-system. Let $C(t)$ and $\tC(t)$ denote the total number of jobs at time $t$ in the $\pi$-system and the $\tpi$-system, respectively.

Consider the first time in the $\pi$-system such that the policy idles the fast server after a decision when there are jobs in the queue at the decision time.
Without loss of generality, we shift time so that this time instance is time $0$. 
There are two possible scenarios.
\begin{itemize}
\item \textbf{Scenario 1:} After the decision, the fast server is idle and there is at least one job in the queue, regardless of whether any jobs were assigned to the slow servers during the decision.
\item \textbf{Scenario 2:} After the decision, the fast server is idle and there is no job in the queue, which means that at least one job was assigned to the slow servers during the decision.
\end{itemize}
We discuss these two scenarios one by one.
Note that the coupling in this proof is up to possible relabeling of job identities, which does not affect the cost.

\subsection*{Scenario 1}

In the $\tpi$-system at time $0$, we first set the state to be the same as the post-decision state in the $\pi$-system, but we then assign a job in the queue, call it job~$J$, to the fast server. Denote the service time of job $J$ in the $\tpi$-system by $\tau_{\textup{F}}$. Then $\tau_{\textup{F}} \sim \operatorname{Exp}(\mu_1)$.
We let the two systems have the same job arrivals.

We consider the following two stopping times for the $\pi$-system.
Let $\tau_0$ be the first time the $\pi$-system assigns a job, call it job $J_0$, to the fast server.
If this never happens, let $\tau_0=\infty$.
Let $\tau_1$ be the first time the $\pi$-system assigns the last job in the queue, call it job $J_1$, to a slow server.
If this never happens, let $\tau_1=\infty$.
Then we let $\tau=\min\{\tau_0,\tau_1\}$, which is still a stopping time.
We consider the two possible cases for $\tau$, which are illustrated in Figure~\ref{fig:two-case-timeline}.

\begin{figure}[h]
\centering
\begin{tikzpicture}[x=1.05cm,y=1cm,>=Latex,font=\small]
  \coordinate (zeroTop) at (0,1.15);
  \coordinate (zeroBot) at (0,0);
  \coordinate (tauFTop) at (3,1.15);
  \coordinate (tauFBot) at (3,0);
  \coordinate (tauOneTop) at (6.5,1.15);
  \coordinate (tauOneBot) at (6.5,0);

  \draw[->] (0,1.15) -- (7.4,1.15) node[right] {time};
  \draw[->] (0,0) -- (7.4,0) node[right] {time};
  \node[left] at (0,1.15) {$\pi$-system};
  \node[left] at (0,0) {$\tpi$-system};

  \draw[thick] (0,0.98) -- (0,1.34);
  \draw[thick] (0,-0.18) -- (0,0.18);
  \node[below] at (0,-0.18) {$0$};
  \node[below] at (0,0.98) {$0$};

  \draw[thick,red] (3,-0.18) -- (3,0.18);
  \node[below,red] at (3,-0.18) {$\tau_{\textup{F}}$};
  \draw[red] (3,0) -- (3,1.15);

  \draw[thick,red] (6.5,-0.18) -- (6.5,0.18);
  \node[below,red] at (6.5,-0.18) {$\tau$};
  \draw[red] (6.5,0) -- (6.5,1.15);


  \draw[decorate,decoration={brace,amplitude=10pt,raise=0pt}] (0,1.55) -- (2.98,1.55)
    node[midway,above=8pt,text=blue] {Case 1: $\tau$ here};
  \draw[decorate,decoration={brace,amplitude=10pt,raise=0pt}] (3.02,1.55) -- (6.8,1.55)
    node[midway,above=8pt,text=blue] {Case 2: $\tau$ here};

  \draw[<->,red] (3,0.57) -- (6.5,0.57);
  \node[
    red,
    fill=white,
    inner sep=1.5pt,
    align=center
    ] at (4.75,0.57) {One fewer job\\[-0.3em] in the $\widetilde{\pi}$-system};
\end{tikzpicture}
\caption{The relative positions of $\tau$ and $\tau_{\textup{F}}$.}
\label{fig:two-case-timeline}
\end{figure}

\textbf{Case 1:} $\tau \le \tau_{\textup{F}}$.\phantom{a}
In this case, first, we let the $\tpi$-system copy all the job assignments and service times from the $\pi$-system except for job $J$ until $\tau$, which can be verified to be feasible.
At time $\tau$, if $\tau=\tau_0$, we set the service time of job $J_0$ in the $\pi$-system equal to the remaining service time of job $J$ in the $\tpi$-system. 
Due to the stopping time properties, the remaining service time of job $J$ in the $\tpi$-system is still $\operatorname{Exp}(\mu_1)$. Thus the coupling is valid. 
From time $\tau$ onward, the states of the two systems are coupled to be identical.
Therefore,
\begin{equation}
\tC(t)=C(t), \qquad \forall t \ge 0.
\label{eq:case1-same-count}
\end{equation}

Now consider the other case where $\tau=\tau_1$.
At time $\tau$, the $\pi$-system assigns a queued job to a slow server but the $\tpi$-system does not have any jobs in the queue.
Again, due to the stopping time properties, the remaining service time of job $J$ at $\tau$ is still $\operatorname{Exp}(\mu_1)$.
Then in this case, we switch to the coupling for Scenario 2 at time $\tau$ and resample all the job service times accordingly.
The coupling for Scenario 2 guarantees that
\begin{equation}
\tC(t)\le C(t), \qquad \forall t \ge 0.
\label{eq:case1-switch}
\end{equation}

\textbf{Case 2:} $\tau > \tau_{\textup{F}}$.\phantom{a}
In this case, let the $\tpi$-system copy all the job assignments and service times from the $\pi$ system except for job $J$ and job $J_0$ or $J_1$.
If $\tau=\tau_0$, the $\tpi$ system simply idles the busy server during the service of job $J_0$.
If $\tau=\tau_1$, the $\tpi$ system simply idles the slow server that serves job $J_1$ in the $\pi$-system during the service of job $J_1$.
In either case,
\begin{equation}
\begin{cases}
\tC(t)=C(t)-1, & \tau_{\textup{F}} \le t < \tau,\\
\tC(t)\le C(t), & \text{otherwise}.
\end{cases}
\label{eq:case2-one-fewer-job}
\end{equation}

Combining the two cases, we write the cost difference between the two systems as follows:
\begin{align}
\E{\int_0^\infty e^{-\alpha t}\bigl(\tC(t)-C(t)\bigr)\,dt}
&\le -\E{\int_{\tau_{\textup{F}}}^{\tau} e^{-\alpha t}\,dt \given \tau>\tau_{\textup{F}}}\,\P{\tau>\tau_{\textup{F}}}.
\label{eq:discounted-cost-difference}
\end{align}
It is clear that
\begin{equation}
\E{\int_{\tau_{\textup{F}}}^{\tau} e^{-\alpha t}\,dt \given \tau>\tau_{\textup{F}}} > 0.
\end{equation}
Now since $\tau$ is a stopping time for the $\pi$-system with $\tau > 0$ with probability $1$ and $\tau_{\textup{F}}$ is sampled at time $0$ independently, we can see that
\begin{equation}
\P{\tau>\tau_{\textup{F}}}>0.
\label{eq:positive-probability-strict-improvement}
\end{equation}
Hence,
\begin{equation}
\E{\int_0^\infty e^{-\alpha t}\bigl(\tC(t)-C(t)\bigr)\,dt}<0.
\label{eq:strict-cost-improvement}
\end{equation}
Therefore, the policy $\tpi$ strictly improves over the policy $\pi$.

\subsection*{Scenario 2}
In this scenario, at time $0$, the policy $\pi$ must have assigned at least one job, call it $J$, to a slow server. 
Without loss of generality, suppose this slow server is server $2$.
We let policy $\tpi$ send job $J$ to server $1$ instead in the $\tpi$-system.

We couple the $\tpi$-system and the $\pi$-system in the following way.  We first couple the service times of job~$J$, denoted as $\tau_{\textup{F}}$ in the $\tpi$-system and $\tau_{\textup{S}}$ in the $\pi$-system. 
Let $\tau_A$ and $\tau_B$ be two independent random variables with
\begin{equation}
    \tau_A\sim \operatorname{Exp}(\mu_2),\qquad \tau_B\sim \operatorname{Exp}(\mu_1-\mu_2).
\end{equation}
Then let
\begin{equation}
    \tau_{\textup{F}} = \min\{\tau_A,\tau_B\},\qquad \tau_{\textup{S}} = \tau_A.
\end{equation}
It is easy to verify that $\tau_{\textup{F}}\sim\operatorname{Exp}(\mu_1)$ and $\tau_{\textup{S}}\sim\operatorname{Exp}(\mu_2)$.
We then couple all future job arrival times.  We also let $\tpi$ follow the same job assignment decisions as $\pi$, except for job $J$ and possibly another job discussed below.  When $\tpi$ and $\pi$ make the same job assignment, we let the service times of the job be identical in the two systems.

Let $\tau_0$ be the first time at which the $\pi$-system assigns a job, call it $J_0$, to the fast server.
If this never happens, let $\tau_0=\infty$.
We discuss three possibilities for the ordering of $\tau_0,\tau_A,\tau_B$.

\textbf{Case 1:} $\tau_0 \le \min\{\tau_A,\tau_B\}$.\phantom{a}
In this case, let $\tpi$ assign job $J_0$ to the slow server, server $2$.  Before time $\tau_0$, the two systems have the same number of jobs, except that job $J$ is at server $1$ in the $\tpi$-system and at server $2$ in the $\pi$-system.  
After the decision at time $\tau_0$, in the $\tpi$-system, job $J$ is at server $1$ and job $J_0$ is at server $2$; in the $\pi$-system, job $J_0$ is at server $1$ and job $J$ is at server $2$.
At time $\tau_0$, one can verify that conditioning on $\tau_0 \le \min\{\tau_A,\tau_B\}$, the residual service times for job $J$ in the $\tpi$-system and the $\pi$-system are still $\operatorname{Exp}(\mu_1)$ and $\operatorname{Exp}(\mu_2)$, respectively.
Therefore, we can abandon $\tau_A$ and $\tau_B$ and redraw the service times for jobs $J$ and $J_0$.
In particular, let job $J$ in the $\tpi$-system and job $J_0$ in the $\pi$-system have the same service time following $\operatorname{Exp}(\mu_1)$, and job $J_0$ in the $\tpi$-system and job $J$ in the $\pi$-system have the same service time following $\operatorname{Exp}(\mu_2)$, all independent of everything else.  Then the two systems become identical.  Therefore, in this case,
\begin{equation}
\tC(t) = C(t), \qquad \forall t \ge 0.
\label{eq:case1-count-comparison}
\end{equation}

\textbf{Case 2:} 
$\tau_A \le \tau_B$ and $\tau_0 > \tau_A$.\phantom{a}
When $\tau_A \le \tau_B$, we can see that $\tau_{\textup{F}}=\tau_{\textup{S}}$.
Therefore, the two systems are identical after $\tau_A$, and thus
\begin{equation}
\tC(t) = C(t), \qquad \forall t \ge 0.
\label{eq:case2-count-comparison}
\end{equation}

\textbf{Case 3:} 
$\tau_A > \tau_B$ and $\tau_0 > \tau_B$.\phantom{a}
In this case, the $\tpi$-system is able to copy all the job assignments except for job $J$, so the two systems have identical service times for all jobs except for job $J$.  Hence
\begin{equation}
\tC(t)
=
\begin{cases}
C(t)-1, & \tau_B \le t < \tau_A,\\
C(t), & \text{otherwise}.
\end{cases}
\label{eq:case3-count-comparison}
\end{equation}

Combining these two cases, we write the cost difference between the two systems as follows:
\begin{align}
\E{\int_0^\infty e^{-\alpha t}\bigl(\tC(t)-C(t)\bigr)\,dt}
& = -\E{\int_{\tau_B}^{\tau_A} e^{-\alpha t}\,dt \given \tau_A>\tau_B,\tau_0>\tau_B}\,\P{\tau_A>\tau_B,\tau_0>\tau_B}.
\label{eq:discounted-cost-difference-scenario2}
\end{align}
It is clear that
\begin{equation}
    \E{\int_{\tau_B}^{\tau_A} e^{-\alpha t}\,dt \given \tau_A>\tau_B,\tau_0>\tau_B} > 0.
\end{equation}
It remains to show that $\P{\tau_A>\tau_B,\tau_0>\tau_B}>0$.
Note that if there are no job arrivals before $\tau_B$, then the $\pi$ system cannot assign a job to the fast server before $\tau_B$.
Let $\tau_C\sim \operatorname{Exp}(\lambda)$ be independent of everything else.
Then
\begin{align}
\P{\tau_A>\tau_B,\tau_0>\tau_B} \ge \P{\tau_A>\tau_B,\tau_C>\tau_B} >0.
\end{align}
Hence,
\begin{equation}
\E{\int_0^\infty e^{-\alpha t}\bigl(\tC(t)-C(t)\bigr)\,dt}<0.
\label{eq:strict-cost-improvement}
\end{equation}
Therefore, the policy $\tpi$ strictly improves over the policy $\pi$, which completes the proof.

Combining these two scenarios completes the proof of the lemma.
\ifarxiv

\else
\Halmos
\fi
\end{myproof}

\section{Proof of Lemma~\ref{lem:discounted-to-average}}\label{sec:proof-lem-discounted-to-average}
\newcommand{\Tstd}{\mathcal{T}^{\mathrm{standard}}}
\newcommand{\A}{\mathcal{A}}
\newcommand{\Ecal}{\mathcal{E}}
\renewcommand{\tpi}{\widetilde{\pi}}
\newcommand{\tS}{\widetilde{S}}
\newcommand{\tX}{\widetilde{X}}
\newcommand{\tY}{\widetilde{Y}}

\begin{myproof}
We use standard results for Markov decision processes (MDPs) with a countable state space in \citet{Sen_89}.

We first explain how our Bellman operator $\Bellman$ defined in \eqref{eq:Bellman} for the post-decision-state value function can be written as the Bellman operator for a standard-form MDP, as in \citet{Put_05} and as used in \citet{Sen_89}.  The Bellman operator for a standard-form MDP with discount factor $\rho_\alpha$ is typically written as
\begin{equation}
\label{eq:standard-bellman}
(\Tstd f)(s)
= \min_{a\in \A(s)} \left\{ c(s,a) + \rho_\alpha \sum_{s'} P(s'\mid s,a) f(s') \right\},
\end{equation}
where $s$ is the current state, $a$ is an action, $\A(s)$ is the set of feasible actions at state $s$, $c(s,a)$ is the one-step cost, and $P(s'\mid s,a)$ is the transition probability from state $s$ to state $s'$ under action $a$.

For our problem, we define a standard-form MDP as follows.  
The state is still $s=(x,y)$, and the cost function is still $c(s)=x+y$.
Define an action $a$ as a vector $a=(a_e)_{e\in\Ecal}$,
where the index $e$ corresponds to a type of event.  Specifically, $e$ is from the following set of events
\begin{equation}
\label{eq:event-set}
\begin{aligned}
\Ecal = \{&\text{arrival},\ \text{potential departure from server }1,\\
&\text{potential departure from server }2,\
\text{potential departure from server }3\}.
\end{aligned}
\end{equation}
Each component $a_e$ can be thought of as what to do if event $e$ happens.  For each state $s$, let $\sigma(s,e)$ be the state immediately after event $e$.  Then define the set of feasible actions for state $s$ by
\begin{equation}
\label{eq:standard-action-set}
\A(s)
= \left\{ a=(a_e)_{e\in\Ecal}: \text{for all } e\in\Ecal,\ a_e \text{ is a feasible job assignment for } \sigma(s,e) \right\}.
\end{equation}
Let $\psi(\sigma(s,e),a_e)$ be the post-decision state immediately after applying job assignment $a_e$ to state $\sigma(s,e)$.  Now define
\begin{equation}
\label{eq:standard-transition-probability}
P(s'\mid s,a)
= \sum_{e\in\Ecal} p_e \1\left\{\psi(\sigma(s,e),a_e)=s'\right\},
\end{equation}
where $p_e$ is the probability that the next event is $e$.  It is not hard to see that this is a valid transition probability matrix.  
Also note that our cost function $c$ does not depend on the action.  Therefore, the standard Bellman operator can be written as
\begin{align}
\label{eq:standard-to-post-decision-operator}
(\Tstd f)(s)
&= \min_{a=(a_e)\in\A(s)} \left\{ c(s) + \rho_\alpha \sum_{s'} P(s'\mid s,a) f(s') \right\} \\
&= c(s) + \rho_\alpha \min_{a=(a_e)\in\A(s)}
\sum_{s'} \sum_{e\in\Ecal} p_e
\1\left\{\psi(\sigma(s,e),a_e)=s'\right\} f(s') \\
&= c(s) + \rho_\alpha \sum_{e\in\Ecal} p_e
\min_{a_e:\ a_e \text{ feasible for } \sigma(s,e)}
\sum_{s'} \1\left\{\psi(\sigma(s,e),a_e)=s'\right\} f(s') \\
&= c(s) + \rho_\alpha \sum_{e\in\Ecal} p_e
\min_{a_e:\ a_e \text{ feasible for } \sigma(s,e)}
f\bigl(\psi(\sigma(s,e),a_e)\bigr) \\
&= (\Bellman f)(s).
\end{align}
Thus our Bellman operator $\Bellman$ is the same as $\Tstd$ for the standard-form MDP defined above, and our discounted optimal value function $V_\alpha^{(\infty)}$ is also the value function for the associated standard-form MDP. 
This assures that although we used the post-decision state and the corresponding Bellman operator, it still fits into standard-form MDPs, and thus we can use tools developed for standard-form MDPs.

We are now ready to verify the assumptions and apply the theorem in \citet{Sen_89}, which directly implies Lemma~\ref{lem:discounted-to-average}.  The assumptions we verify for our problem are the following.

\paragraph{Assumption 1 in \citet{Sen_89}.}
\emph{
For every state $s$ and every $\alpha>0$,
\begin{equation}
\label{eq:sennott-assumption-finite-value}
V_\alpha^{(\infty)}(s)<\infty.
\end{equation}
}

This follows from the fact that jobs arrive according to a Poisson process.  We have used this fact when defining $V_\alpha^{(\infty)}$ in Section~\ref{sec:value-iteration}.

\paragraph{Assumption 2 in \citet{Sen_89}.}
\emph{
There exist a reference state $s_0$ and a nonnegative constant $N$ such that
\begin{equation}
\label{eq:sennott-assumption-lower-bound}
V_\alpha^{(\infty)}(s)-V_\alpha^{(\infty)}(s_0) \ge -N,
\qquad \forall s,\ \forall \alpha>0.
\end{equation}
}

To see why this is true, choose $s_0$ to be the state $(0,0)$, that is, the state where the system has no jobs.  Then, due to the monotonicity properties \eqref{eq:mono-x} and \eqref{eq:mono-y} established for $V_\alpha^{(\infty)}$, we know
\begin{equation}
\label{eq:monotonicity-gives-lower-bound}
V_\alpha^{(\infty)}(s)-V_\alpha^{(\infty)}(s_0)
\ge 0,
\qquad \forall s,\ \forall \alpha>0.
\end{equation}
Thus Assumption 2 holds with $s_0=(0,0)$ and $N=0$.

\paragraph{Assumption 3 in \citet{Sen_89}.}
\emph{There exist a reference state $s_0$ and nonnegative constants $M_s$ such that
\begin{equation}
\label{eq:sennott-assumption-upper-bound}
V_\alpha^{(\infty)}(s)-V_\alpha^{(\infty)}(s_0) \le M_s,
\qquad \forall s,\ \forall \alpha>0.
\end{equation}
In addition,
\begin{equation}
\label{eq:sennott-assumption-integrability}
\sum_{s'} P(s'\mid s,a) M_{s'} < \infty,
\qquad \forall s,\ \forall a\in \A(s).
\end{equation}
}

To verify this assumption for our problem, we consider the work-conserving policy $\tpi$, that is, the policy that first uses the fast server whenever possible and also uses the slow servers whenever possible.
Below we first establish a few basic properties for the system under $\tpi$, and then we define nonnegative constants $M_s$ using quantities based on this system.
To simplify the argument, consider the uniformization of the continuous-time Markov chain under $\tpi$ with uniformization rate
\begin{equation}
\label{eq:uniformization-rate}
\gamma = \lambda + \mu_1 + 2\mu_2.
\end{equation}
That is, consider the discrete-time Markov chain whose state at time step $n$ is the post-decision state after the $n$-th event, denoted as $\tS(n) = \bigl(\tX(n),\tY(n)\bigr).$
Consider the following set of states, which consists of all states that communicate with the state $s_0=(0,0)$:
\begin{equation}
\label{eq:communicating-class-R}
\mathcal R
= \{(0,0),(1,0),(0,1),(1,1),(0,2)\}
  \cup \{(x,2): x\ge 1\}
\triangleq \mathcal B \cup \{(x,2): x\ge 1\}.
\end{equation}
States not in $\mathcal R$ are in the set
\begin{equation}
\label{eq:transient-set-Rc}
\mathcal R^c
= \{(x,y)\in \mathbb Z_+\times\{0,1,2\}: x\ge 2,\ y\in\{0,1\}\}.
\end{equation}
States in $\mathcal R^c$ are transient since, starting from any $s\in\mathcal R^c$, the state transitions to a state in $\mathcal R$ after one step due to the work-conserving decision. Note that the Markov chain $(\tS(n):n\in\mathbb Z_+)$ is aperiodic, since the state $(0,0)$ has a self-transition under uniformization and all states in $\mathcal{R}$ communicate.

We now show that $(\tS(n))$ is positive recurrent in $\mathcal R$. Consider the cost function $c(x,y)=x+y$ as the Lyapunov function. Then for any $(x,y)\in\{(x,2):x\ge 1\}$, the drift of $c$ is
\begin{align}
\Delta c(x,y)
&\triangleq
\E{\tX(n+1)+\tY(n+1)-\tX(n)-\tY(n)
\given (\tX(n),\tY(n))=(x,y)} \notag\\
&= \frac{\lambda}{\gamma}(x+y+1)
  + \frac{\mu_1+2\mu_2}{\gamma}(x+y-1)
  - (x+y) \notag\\
&= \frac{1}{\gamma}(\lambda-\mu_1-2\mu_2) <0.
\label{eq:negative-drift-tail}
\end{align}
We also know that for any $(x,y)\in\mathcal B$,
\begin{equation}
\label{eq:bounded-drift-finite-set}
\Delta c(x,y)\le 1.
\end{equation}
Since $\mathcal B$ is a finite set, by the Foster-Lyapunov theorem, the Markov chain $(\tS(n))$ is positive recurrent in $\mathcal R$. As a result, $(\tS(n))$ has a unique stationary distribution with a positive probability on each state in $\mathcal R$.

We can also show that
\begin{equation}
\label{eq:stationary-finite-first-moment}
\E{c\bigl(\tX(\infty),\tY(\infty)\bigr)}<\infty,
\end{equation}
where $(\tX(\infty),\tY(\infty))$ follows the stationary distribution. This follows from the drift bounds above and the fact that
\begin{equation}
\label{eq:bounded-one-step-increments}
c\bigl(\tX(n+1),\tY(n+1)\bigr)-c\bigl(\tX(n),\tY(n)\bigr)\le 1 \qquad \text{a.s.},
\end{equation}
using the results in \citet{Haj_82}.

We are now ready to define the needed constants $M_s$ for each state $s$. Starting the chain $(\tS(n))$ from $s$, let $n_s$ be the first time the state hits $s_0=(0,0)$. That is,
\begin{equation}
\label{eq:hitting-time-ns}
n_s = \inf\{n\in\mathbb Z_+: \tS(n)=(0,0)\}.
\end{equation}
Let
\begin{equation}
\label{eq:Ms-definition}
M_s = \E{\sum_{n=0}^{n_s-1} c\bigl(\tX(n),\tY(n)\bigr)
\given (\tX(0),\tY(0))=s}.
\end{equation}
It is easy to see that $M_s$ does not depend on the discount rate $\alpha$. We now show that
\begin{equation}
\label{eq:Ms-finite-goal}
M_s<\infty, \qquad \forall s.
\end{equation}
It suffices to focus on $s\in\mathcal R$, since we know that even if $s\notin\mathcal R$, the state at time step $1$ will be in $\mathcal R$. When $s\in\mathcal R$, we know that $(\tS(n))$ on $\mathcal R$ is aperiodic, irreducible, and positive recurrent. We also know that $\E{c\bigl(\tX(\infty),\tY(\infty)\bigr)}<\infty$.
By Proposition 4 in \citet{Sen_89}, we know that $M_s<\infty$.

We now verify that
\begin{equation}
\label{eq:upper-bound-by-Ms-goal}
V_\alpha^{(\infty)}(s)-V_\alpha^{(\infty)}(s_0)
\le M_s, \qquad \forall s,\ \forall \alpha>0.
\end{equation}
To see this, consider the policy that runs $\tpi$ until the first time the state hits $(0,0)$ and then switches to the optimal discounted policy.
Then for any state $s$ and any $\alpha>0$
\begin{align}
V_\alpha^{(\infty)}(s)
&\le \E{\sum_{n=0}^{n_s-1} \rho_\alpha^n c\bigl(\tX(n),\tY(n)\bigr)+ \rho_\alpha^{n_s} V_\alpha^{(\infty)}(s_0)
\given (\tX(0),\tY(0))=s}\nonumber\\
&\le M_s+V_\alpha^{(\infty)}(s_0).
\label{eq:discounted-value-upper-bound-by-Ms}
\end{align}

In addition, for each state $s$, there are only finitely many states $s'$ such that $P(s'\mid s,a)>0$ for some action $a$.  Therefore,
\begin{equation}
\label{eq:finite-transition-weighted-Ms}
\sum_{s'} P(s'\mid s,a)M_{s'}<\infty,
\qquad \forall s,\ \forall a\in \A(s).
\end{equation}
This completes the verification of Assumption 3 in \citet{Sen_89}.

Now we have verified all the assumptions needed.  Applying the theorem in \citet{Sen_89} gives the desired result.
The theorem is proved in the appendix of \citet{Sen_89}, which has explicit limit relationships that give \eqref{eq:discounted-to-average-limit} in Lemma~\ref{lem:discounted-to-average}.
\ifarxiv

\else
\Halmos
\fi
\end{myproof}

\section{Proof of Lemma~\ref{lem:opt-full-state}}\label{sec:proof-lem-opt-full-state}

\begin{myproof}
Let $\piavgopt$ be a stationary policy attaining the minimum in the average-cost optimality equation in Lemma~\ref{lem:discounted-to-average}. 
By Lemma~\ref{lem:discounted-to-average}, $\piavgopt$ is average-cost optimal for the restricted problem on the reduced state space and has long-run average cost $g$.
We define the corresponding policy in the original state space as follows. 
With a slight abuse of notation, we also call this policy $\piavgopt$. 
At each decision time, let the state be $\sfull=(q,z,y)\in \mathbb{Z}_{+}\times\{0,1\}\times\{0,1,2\}$ in the original state representation.
If the fast server is idle and the queue is nonempty, the policy first assigns one queued job to the fast server. It then applies the slow-server decision prescribed by the reduced-state policy $\piavgopt$ at the simplified state $(q+z,y)$.
Then under policy $\piavgopt$, the long-run average cost starting from any state $\sfull$ is $g^{\piavgopt}(\sfull)=g$.
It remains to show that no admissible policy for the original problem can have
average cost smaller than $g$.

Consider any admissible policy $\pi$ for the original problem and an arbitrary initial state $\sfull=(q, z, y)$. 
Note that here the initial state $\sfull$ is a post-decision state, and the first decision will be made after the first event.
Let $s$ be the corresponding simplified state $s=(q+z, y)$.
Let $g^{\pi}(\sfull)$ be the long-run average cost of $\pi$ starting from the state $\sfull$.
We also consider the total discounted cost of $\pi$ starting from $\sfull$ under a discount rate $\alpha>0$, denoted as $\bar{V}^{\pi}_{\alpha}(\sfull)$.
Our proof has two steps.
In the first step, we prove that
\begin{equation}
    g
    \le
    \limsup_{\alpha\downarrow0}
    \alpha \bar{V}^{\pi}_{\alpha}(\sfull).
    \label{eq:g-bounded-by-policy-discounted-limsup-app-c-pre}
\end{equation}
In the second step, we prove that
\begin{equation}
    \limsup_{\alpha\downarrow0}
    \alpha \bar{V}^{\pi}_{\alpha}(\sfull)
    \le
    g^{\pi}(\sfull).
    \label{eq:abelian-inequality-app-c}
\end{equation}
Then combining them gives the desired result.

We start with the first step.
Let $\bar{V}^{*}_{\alpha}(\sfull)$ be the optimal value function for the $\alpha$-discounted-cost problem in the original state space.
Then $\bar{V}^{*}_{\alpha}(\sfull)\le \bar{V}^{\pi}_{\alpha}(\sfull)$.
We now relate $\bar{V}^{*}_{\alpha}(\sfull)$ to $\Val(s)$.
For the full post-decision state
\(\sfull=(q,z,y)\), define its fast-filled representation
\[
\sfull^F =
\begin{cases}
(q,z,y), & z=1\text{ or }q=0,\\
(q-1,1,y), & z=0,\ q\ge1.
\end{cases}
\]
Recall that the corresponding simplified state is $s=(q+z,y)$.
It is not hard to see that $\bar{V}^{*}_{\alpha}(\sfull^F)=V_\alpha^*(s)$.
Moreover,
\[
\bar V_\alpha^*(\bar s^F)\le \bar V_\alpha^*(\bar s),
\]
which can be proven using the same coupling arguments in the proof of Lemma~\ref{lem:simplified-state}. Therefore, $V_\alpha^*(s)\le \bar{V}^{*}_{\alpha}(\sfull)\le \bar{V}^{\pi}_{\alpha}(\sfull)$.
Recall that $\Val(s)=(\alpha+\gamma)V_\alpha^*(s)$, and $\rho_\alpha = \frac{\gamma}{\alpha+\gamma}$.
Thus,
\begin{equation}
    (1-\rho_{\alpha})V^{(\infty)}_{\alpha}(s)
    \le
    \alpha \bar{V}^{\pi}_{\alpha}(\sfull).
    \label{eq:discounted-lower-bound-scaled-app-c}
\end{equation}
Now take the sequence $\alpha_k\downarrow0$ from Lemma~\ref{lem:discounted-to-average}. Since
\begin{equation}
    (1-\rho_{\alpha_k})V^{(\infty)}_{\alpha_k}(s)\to g,
    \label{eq:reduced-vanishing-discount-limit-app-c}
\end{equation}
we obtain
\begin{equation}
    g
    \le
    \limsup_{k\to\infty}
    \alpha_k \bar{V}^{\pi}_{\alpha_k}(\sfull)\le
    \limsup_{\alpha\downarrow 0}
    \alpha \bar{V}^{\pi}_{\alpha}(\sfull).
    \label{eq:g-bounded-by-policy-discounted-limsup-app-c}
\end{equation}

We now carry out the second step, where we prove \eqref{eq:abelian-inequality-app-c}.
The arguments are standard.
Let $(Q(t),Z(t),Y(t))$ be the system state at time $t$ under policy $\pi$, and let
\begin{equation}
    \bar{c}^{\pi}(t)
    =
    \E{Q(t)+Z(t)+Y(t)\given (Q(0),Z(0),Y(0))=\sfull}.
    \label{eq:expected-population-process-app-c}
\end{equation}
Then
\begin{equation}
    \bar{V}^{\pi}_{\alpha}(\sfull)
    =
    \int_{0}^{\infty} e^{-\alpha t}\bar{c}^{\pi}(t)\,dt.
    \label{eq:discounted-cost-as-laplace-transform-app-c}
\end{equation}
Let 
\begin{equation}
    \Gamma^{\pi}(t)
    =
    \int_{0}^{t}\bar{c}^{\pi}(u)\,du.
    \label{eq:cumulative-expected-cost-app-c}
\end{equation}
We know that $\bar{c}^{\pi}(t)$ is nonnegative and locally integrable since $\bar{c}^{\pi}(t)\le q + z + y + \lambda t$ due to the Poisson arrivals.
Then $\Gamma^{\pi}(t)$ is absolutely continuous and nondecreasing.

We know that
\begin{equation}
    g^{\pi}(\sfull)
    =
    \limsup_{T\to\infty}
    \frac{1}{T}\Gamma^{\pi}(T).
    \label{eq:average-cost-limsup-app-c}
\end{equation}
If $g^{\pi}(\sfull)=\infty$, then there is nothing to prove. Otherwise, fix any
$\varepsilon>0$. By \eqref{eq:average-cost-limsup-app-c}, there exists
$T_0<\infty$ such that
\begin{equation}
    \Gamma^{\pi}(t)
    \le
    \bigl(g^{\pi}(\sfull)+\varepsilon\bigr)t,
    \qquad t\ge T_0.
    \label{eq:limsup-eventual-average-bound-app-c}
\end{equation}
The properties of $\Gamma^{\pi}(t)$ allow the following integration by parts
\begin{equation}
\alpha \bar{V}^{\pi}_{\alpha}(\sfull)
=
\alpha\int_{0}^{\infty}e^{-\alpha t}\,d\Gamma^{\pi}(t)
=
\alpha^2\int_{0}^{\infty}e^{-\alpha t}\Gamma^{\pi}(t)\,dt,
\label{eq:abelian-integration-by-parts-app-c}
\end{equation}
where the boundary term vanishes because $\Gamma^{\pi}(t)=O(t^2)$.
Using
\eqref{eq:limsup-eventual-average-bound-app-c}, we obtain
\begin{equation}
\begin{aligned}
    \alpha \bar{V}^{\pi}_{\alpha}(\sfull)
    &=
    \alpha^2\int_{0}^{T_0}e^{-\alpha t}\Gamma^{\pi}(t)\,dt
    +
    \alpha^2\int_{T_0}^{\infty}e^{-\alpha t}\Gamma^{\pi}(t)\,dt  \\
    &\le
    \alpha^2\int_{0}^{T_0}e^{-\alpha t}\Gamma^{\pi}(t)\,dt
    +
    \bigl(g^{\pi}(\sfull)+\varepsilon\bigr)
    \alpha^2\int_{T_0}^{\infty}e^{-\alpha t}t\,dt .
\end{aligned}
    \label{eq:abelian-upper-bound-decomposition-app-c}
\end{equation}
Taking the $\limsup_{\alpha\downarrow 0}$ gives
\begin{equation}
    \limsup_{\alpha\downarrow0}
    \alpha \bar{V}^{\pi}_{\alpha}(\sfull)
    \le
    g^{\pi}(\sfull)+\varepsilon.
    \label{eq:abelian-bound-with-epsilon-app-c}
\end{equation}
Letting $\varepsilon\downarrow0$ gives
\begin{equation}
    \limsup_{\alpha\downarrow0}
    \alpha \bar{V}^{\pi}_{\alpha}(\sfull)
    \le
    g^{\pi}(\sfull).
    \label{eq:abelian-inequality-app-c-2}
\end{equation}
This completes the proof.
\ifarxiv

\else
\Halmos
\fi
\end{myproof}

\section{Brief Report on Interactions with GPT-5.5 Pro}
\label{sec:report-gpt}

In this section, we provide a brief report on some of our interactions with GPT-5.5 Pro, also referred to as the \emph{model}.
The purpose of this section is to give context for how the results in this paper were obtained and to document the role played by the model in the discovery and writing process.
The prompting was carried out by one of the authors, Weina Wang.
All prompts reported here were quoted verbatim.

We begin with the initial prompt and the corresponding initial response from GPT-5.5 Pro, where the model already found the key proof ideas.
We next give some examples of the errors and inaccuracies in the model's generated content.
We then present two additional prompt examples, together with summaries of the model's responses. 
The goal of this section is not to give a complete transcript of our use of AI, but rather to demonstrate some author-AI interactions and the kinds of reasoning produced by the model.

It is important to note that, after many rounds of interaction, the writeup produced by GPT-5.5 Pro was mostly correct at the level of the main mathematical ideas.
However, its overall quality was still far from the standard expected of a theoretical paper in computer science or operations research. 
For example, it introduced many redundant notations, the logical flow was often indirect, and some arguments were still too brief to be verifiable. 
The present paper is therefore a complete human rewriting of the writeup generated by the model.

\subsection{First Prompt and First Response}

We started by asking GPT-5.5 Pro to solve the problem and write a paper about it, giving it some background materials and references. The first prompt and response are given below.

\begin{promptbox}[title={First prompt}]
In this project, I'd like to work on an old open problem in queueing. This is a well-known problem that has been studied by many people, but it's still unsolved. The position paper I uploaded to the sources named ``the slower-server problem with multiple slower servers-100-views-on-queues-Ger Koole.pdf" gives a good articulation of the problem. We can focus on the case where there are one fast servers and two slower servers with the same rate. I'm also uploading some classical papers on this topic for the setting where there is one fast and one slow server: ``A simple proof of the optimality of a threshold policy in a two-server queueing systems.pdf", ``A note on `Optimal control of a queuing system with two heterogeneou servers' ". Those are just examples of papers in this field. Feel free to read other papers. The position paper also has good pointers in it. Can you solve this problem?
\end{promptbox}

\begin{chatgptbox}[title={First GPT response (scribed verbatim into LaTeX, citations omitted)}]
Thought for 20m 39s
\medskip

I think the one-fast/two-identical-slow \textbf{Poisson-arrival} case can be solved by strengthening Koole’s value-iteration cone. The key is to add one inequality that is not present in the two-server proof: convexity in the number of busy slow servers \textbf{along fixed total population}. That inequality makes the two-slow-server decision operator closed.

Koole's position note states the conjecture exactly in the form ``if sending a customer to a slow server is optimal in \((x,y)\), then it is also optimal in \((x+1,y)\)'' for one fast server and two equal slow servers, and notes that the usual inequality-propagation method breaks for multiple slow servers. [source cited] The two-server proof propagates multimodularity plus monotonicity. [source cited] The incomplete-proof problem flagged by de Véricourt–Zhou is that earlier multi-server arguments missed operator cases/boundaries, so the proof below is designed around an explicit closed cone rather than a coupling shortcut. [source cited]

\section*{Proposed theorem}

For an \(M/M/1+2\) system with one fast server of rate \(\mu_1\), two identical slow servers of rate \(\mu_2<\mu_1\), nonpreemptive assignment, linear holding cost equal to the number of customers in system, and \(\lambda<\mu_1+2\mu_2\), there is an optimal discounted and average-cost stationary policy with threshold form:

\[
\text{if a slow-server assignment is optimal at }(x,y),\text{ then it is optimal at }(x+1,y),
\]

for \(y=0,1\). More strongly, for \(y=0\) there are ordered thresholds for using zero, one, or two slow servers.

\section*{The Bellman setup}

Uniformize so that \(\lambda+\mu_1+2\mu_2=1\). Let \(x\) be the number of customers in the queue plus the customer at the fast server, and let \(y\in\{0,1,2\}\) be the number of busy slow servers. Write \(f_y(x)=f(x,y)\).

Use the post-decision value recursion

\[
V_{n+1}=c+P\,Q V_n,\qquad c(x,y)=x+y,
\]

where \(Q\) is the slow-server assignment operator and \(P\) is the event operator. For the proof, use the relaxed assignment operator

\[
(Qf)(x,y)=\min_{0\le a\le 2-y} f(x-a,y+a),
\]

with infeasible negative \(x-a\) terms treated as \(+\infty\). This relaxation allows “moving” a fast-server customer to a slow server at the boundary, but such actions are dominated by keeping that customer on the fast server. This is the same fastest-available-server principle proved by coupling in the older literature; Weber states the general result that, when an assignment is made, an optimal policy assigns to the fastest available processor. [source cited]
Walrand’s two-server note also uses this type of pathwise argument. [source cited] Thus the relaxed and original models have the same optimal value, with a feasible optimal selector.

\section*{The invariant cone}

Define a cone \(\mathcal C\) of functions \(f:\mathbb N_0\times\{0,1,2\}\to\mathbb R\) satisfying the following inequalities.

For all valid \(x\) and \(y\):

\[
\Delta_x f_y(x)=f_y(x+1)-f_y(x)\ge 0,
\]

\[
\Delta_x^2 f_y(x)=f_y(x+1)-2f_y(x)+f_y(x-1)\ge 0,
\]

\[
f_{y+1}(x)-f_y(x)\ge 0,
\]

\[
f_0(x)+f_2(x)-2f_1(x)\ge 0,
\]

\[
S_y(x):=f_y(x)+f_{y+1}(x+1)-f_y(x+1)-f_{y+1}(x)\ge 0,
\]

\[
M_y(x):=f_y(x+1)+f_{y+1}(x-1)-f_y(x)-f_{y+1}(x)\ge 0,
\]

and the new fixed-total-population convexity inequality

\[
R(x):=f_0(x)+f_2(x-2)-2f_1(x-1)\ge 0.
\]

The last inequality is the crucial addition. In total-population coordinates \(n=x+y\), it says the value is convex in the number of busy slow servers along the line \(n=\text{constant}\). This is exactly the missing curvature that makes the two-slow-server minimum operator behave.

\section*{Why the cone is closed}

The event operator is

\[
(Pf)(x,0)=\lambda f_0(x+1)+\mu_1 f_0((x-1)^+)+2\mu_2 f_0(x),
\]

\[
(Pf)(x,1)=\lambda f_1(x+1)+\mu_1 f_1((x-1)^+)+\mu_2 f_0(x)+\mu_2 f_1(x),
\]

\[
(Pf)(x,2)=\lambda f_2(x+1)+\mu_1 f_2((x-1)^+)+2\mu_2 f_1(x).
\]

The nontrivial checks reduce to nonnegative combinations of the cone inequalities. For example, away from the \(x=0\) boundary,

\[
M_0(Pf)(x)
=
\lambda M_0(f)(x+1)
+\mu_1 M_0(f)(x-1)
+\mu_2\{\Delta_x^2 f_0(x)+M_0(f)(x)\}\ge 0,
\]

\[
M_1(Pf)(x)
=
\lambda M_1(f)(x+1)
+\mu_1 M_1(f)(x-1)
+\mu_2\{\Delta_x^2 f_1(x)+M_0(f)(x)\}\ge 0,
\]

and

\[
R(Pf)(x)
=
\lambda R(f)(x+1)
+\mu_1 R(f)(x-1)
+2\mu_2 M_0(f)(x-1)\ge 0.
\]

The \(S_y\) inequalities are similarly closed:

\[
S_0(Pf)(x)=
\lambda S_0(f)(x+1)+\mu_1 S_0(f)(x-1)+\mu_2S_0(f)(x),
\]

\[
S_1(Pf)(x)=
\lambda S_1(f)(x+1)+\mu_1 S_1(f)(x-1)+\mu_2S_0(f)(x).
\]

The coordinate monotonicity, coordinate convexity, and same-\(x\) slow-server convexity inequalities are direct positive-combination checks. Boundary cases use the same inequalities with the fictitious fast-completion term \(f_y((x-1)^+)\).

For the decision operator \(Q\), transform to total-population coordinates:

\[
H(n,j)=f(n-j,j),\qquad j=0,1,2.
\]

Then \(Q\) becomes the tail-minimum operator

\[
\bar H(n,j)=\min_{k\ge j}H(n,k).
\]

Let

\[
A_n=H(n,0)-H(n,1),\qquad B_n=H(n,1)-H(n,2).
\]

The \(M_y\) inequalities imply \(A_n\) and \(B_n\) are nondecreasing in \(n\). The new \(R\)-inequality gives

\[
A_n\ge B_n.
\]

Therefore the minimizer of \(\{H(n,0),H(n,1),H(n,2)\}\) has the ordered form

\[
H(n,0)\;\to\;H(n,1)\;\to\;H(n,2)
\]

as \(n\) increases; it cannot oscillate. This ordered-tail-min property is what fails without \(R\). A direct three-case verification then shows \(Qf\in\mathcal C\). Thus

\[
f\in\mathcal C \quad\Longrightarrow\quad Qf\in\mathcal C \quad\Longrightarrow\quad P Qf\in\mathcal C.
\]

Since \(c\in\mathcal C\), all finite-horizon and discounted value iterates stay in \(\mathcal C\), and the discounted fixed point is in \(\mathcal C\). The average-cost bias follows by the usual vanishing-discount argument because the cone is closed under pointwise limits and additive normalization.

\section*{Threshold conclusion}

Let \(V\) be the discounted value function or an average-cost bias function. Define the diagonal differences

\[
D_0(x)=V_0(x)-V_1(x-1),
\]

\[
D_1(x)=V_1(x)-V_2(x-1).
\]

The \(M_y\) inequalities say precisely that \(D_0(x)\) and \(D_1(x)\) are nondecreasing in \(x\).

For \(y=1\), using the remaining idle slow server is optimal exactly when

\[
V_2(x-1)\le V_1(x)
\quad\Longleftrightarrow\quad
D_1(x)\ge 0.
\]

Since \(D_1(x)\) is nondecreasing, this is an upper set in \(x\).

For \(y=0\), using at least one slow server is optimal when

\[
\min\{V_1(x-1),V_2(x-2)\}\le V_0(x).
\]

Equivalently,

\[
D_0(x)\ge 0
\quad\text{or}\quad
D_0(x)+D_1(x-1)\ge 0.
\]

Both \(D_0(x)\) and \(D_0(x)+D_1(x-1)\) are nondecreasing in \(x\), so this condition is also an upper set. The \(R\)-inequality further gives the ordered structure for choosing one versus two slow servers.

So the conjecture follows for the one-fast/two-identical-slow Poisson model.

I would treat this as a proof draft rather than a final paper proof: the main item to write out fully is the finite three-case verification that \(Q\) preserves the cone. The argument above is the part I’d build the note around. The MAP-arrival version should be approachable by adding an arrival-phase index and requiring the same inequalities componentwise, but I have not checked that extension yet.
\end{chatgptbox}

\subsection{Errors and Inaccuracies}
The author found a minor algebraic error in early versions of the proofs generated by GPT-5.5 Pro. 
In addition, the early versions of the proof of Lemma~\ref{lem:simplified-state} were inaccurate and informal. 
Even after several iterations, it seemed difficult to make GPT-5.5 Pro produce a fully precise proof of this lemma, which involves stochastic coupling. 
However, when asked to check the correctness of this proof, the model was able to identify logical gaps, including gaps in the enumeration of possible scenarios.

\begin{promptbox}[title={Prompt that caught a minor algebraic error}]
I'm reading the proof of Lemma 3 and realized some issues. For example, when you give the expression of X_0^g(x), the second term doesn't seem to be correct when x = 0. Can you check?
\end{promptbox}

The model agreed that this was an error and revised the argument. 
This error occurred in the application of the fast-departure operator $\DFop$ in the proof of Lemma~\ref{lem:pclosure}, specifically in \eqref{eq:event-fast-L1}. The correct algebra is given in the present version of \eqref{eq:event-fast-L1}, where $L_1^g(x,y)=0$ when $x=0$. In the earlier version referred to in the prompt, the corresponding term was
$h((x-1)^+ +1,y)-h((x-1)^+,y)$, which was incorrect since it is equal to $h(1,y)-h(0,y)$ when $x=0$.

Below we give an example of an earlier version of Lemma~\ref{lem:simplified-state} and its proof.
It has vague descriptions such as ``using dummy customers only to synchronize states after the earlier of the two coupled service completions'', without specifying what dummy customers are and how the synchronization is constructed.

\newtheorem*{manuallemma}{Lemma 1 (Fast-server reduction)}
\begin{chatgptbox}[title={Earlier version of GPT generated Lemma~\ref{lem:simplified-state} and its proof}]
\begin{manuallemma}
For the original nonpreemptive system, there is an optimal policy that never routes a customer to an idle slow server while the fast server is idle and a customer is available for service.
\end{manuallemma}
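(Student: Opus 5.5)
We will argue by a pathwise interchange coupling, in the spirit of \citet{Wal_84} and of Scenario~2 in the proof of Lemma~\ref{lem:simplified-state}. Fix any policy $\pi$ and suppose that at some decision epoch, which we shift to time $0$, the fast server is idle after the decision, some job $J$ is available, and $\pi$ places $J$ on an idle slow server (say server~$2$). We build a modified policy $\tpi$ that puts $J$ on server~$1$ and otherwise imitates $\pi$. We then show that the discounted cost of $\tpi$ is no larger, and strictly smaller with positive probability. Applying the modification repeatedly at the first violating epoch shows that such routing is never used by an optimal policy. One could also take the argument-by-contradiction route of Lemma~\ref{lem:simplified-state}. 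The discounted result then passes to the average-cost problem through the vanishing-discount argument already given in Section~\ref{sec:average}.

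\textbf{The coupling.} The service times of $J$ are coupled through independent $\tau_A\sim\operatorname{Exp}(\mu_2)$ and $\tau_B\sim\operatorname{Exp}(\mu_1-\mu_2)$. We set $\tau_{\textup{F}}=\min\{\tau_A,\tau_B\}$ in the $\tpi$-system and $\tau_{\textup{S}}=\tau_A$ in the $\pi$-system, and we share all arrivals and all other service times. Let $\tau_0$ be the first time $\pi$ assigns some job $J_0$ to the fast server. We split into three cases.
\begin{itemize}
\item If $\tau_0\le\min\{\tau_A,\tau_B\}$, then $\tpi$ sends $J_0$ to server~$2$. By memorylessness, conditional on this event we can redraw the residual times so that the two systems coincide after relabeling.
\item If $\tau_A\le\tau_B$ and $\tau_0>\tau_A$, then $J$ finishes simultaneously in both systems, and the systems are identical afterwards.
\item If $\tau_B<\tau_A$ and $\tau_0>\tau_B$, then $J$ finishes earlier in the $\tpi$-system. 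Server~$1$ is then idle in $\tpi$ while $\pi$ still holds $J$ on server~$2$. The $\tpi$-system mimics $\pi$ by keeping server~$2$ artificially idle until $\tau_A$. Every assignment $\pi$ makes remains feasible for $\tpi$, because the fast server is free in both systems. This gives $\tC(t)=C(t)-1$ on $[\tau_B,\tau_A)$ and equality elsewhere.
\end{itemize}
The discounted cost difference is therefore $-\E{\int_{\tau_B}^{\tau_A}e^{-\alpha t}dt;\,\tau_B<\tau_A,\ \tau_0>\tau_B}$. This is strictly negative because, with positive probability, no arrival occurs before $\tau_B<\tau_A$.

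\textbf{Main obstacle: feasibility in the third case.} The hard step is checking that $\tpi$ can copy $\pi$ on $[\tau_B,\tau_A)$ without information or feasibility problems. Two points need care. First, $\pi$ may itself assign a job to server~$1$ during that interval; this is the point $\tau_0>\tau_B$. Then $\tpi$ can do the same, since its server~$1$ is free. After that, the roles of server~$1$ and server~$2$ must be tracked until $\tau_A$, when the systems merge up to relabeling of identical slow servers. Second, $\tpi$ must be nonanticipative. The coupling has to be built so that $\tpi$'s decisions depend only on its own history (with the $\pi$-history simulated from shared randomness), which is where the stopping-time and memorylessness arguments must be stated carefully. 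I would first handle $\tau_0$ as a stopping time of the $\pi$-filtration, exactly as in Lemma~\ref{lem:simplified-state}, and resample residual service times at each stopping time.
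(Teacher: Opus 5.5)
You are not following the proof the paper attaches to this lemma. You are following its rigorous replacement: your coupling $\tau_{\textup{F}}=\min\{\tau_A,\tau_B\}$, $\tau_{\textup{S}}=\tau_A$ and your three-way case split are exactly Scenario~2 in Appendix~\ref{sec:proof-lem-simplified-state}.

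The attached (GPT-generated) proof works differently.
\begin{itemize}
\item It couples the two service times of $C$ through a single clock, $\sigma_1=E/\mu_1\le\sigma_2=E/\mu_2$, and then appeals to unspecified ``dummy customers'' and to ``repeating the interchange at every violation.''
\item The single clock gives a clean pathwise ordering, and the gap $[\sigma_1,\sigma_2)$ can indeed be absorbed by idling a slow server.
\item It breaks down when $\pi$ starts a job on server~1 at some $\tau_0<\sigma_1$, a case that proof never considers. The modified system cannot copy that move, because its server~1 is still busy with $C$.
\item A swap also does not resynchronize the systems. On $\{\sigma_1>\tau_0\}$ the identity $\sigma_2-\tau_0=(\mu_1/\mu_2)(\sigma_1-\tau_0)+(\mu_1/\mu_2-1)\tau_0$ makes the slow residual a shifted exponential that is deterministically tied to the fast residual.
\end{itemize}

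Your coupling fixes exactly this. With probability $\mu_2/\mu_1$ the two completions coincide. Otherwise the residual of $J$ in the $\pi$-system is a fresh $\operatorname{Exp}(\mu_2)$, independent of everything else. So the early-fast-assignment case becomes a clean swap, and the only ``dummy'' is an idle server~2 on $[\tau_B,\tau_A)$. Because you record a strict improvement, the existence of an optimal stationary policy gives the lemma by contradiction. This avoids the limit over infinitely many interchanges that the attached proof leaves implicit.

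The price is a randomized $\widetilde{\pi}$ that simulates $\pi$'s view: a coin of bias $\mu_2/\mu_1$ at $\tau_{\textup{F}}$, and a fictitious $\operatorname{Exp}(\mu_2)$ residual for $J$. For this statement you also need no analogue of the paper's Scenario~1, since your swap works whether or not jobs remain queued after the decision.

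Two details need repair.
\begin{itemize}
\item \textbf{Strictness.} Because you allow queued jobs after the decision at time $0$, ``no arrival before $\tau_B$'' does not force $\tau_0>\tau_B$. $\pi$ may start a queued job on server~1 at a completion on server~3. You must also require no server-3 completion before $\tau_B$, which still has positive probability.
\item \textbf{The tie $\tau_0=\tau_A\le\tau_B$.} This has positive probability, since $J$'s completion in the $\pi$-system is itself a decision epoch. At that instant $J$ has also left server~1 in the $\widetilde{\pi}$-system. So $J_0$ must go to server~1 as in your second case, not to server~2 as your first case prescribes. The case boundaries in Appendix~\ref{sec:proof-lem-simplified-state} have the same slip.
\end{itemize}

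The server-role tracking you worry about in the third case is unnecessary. After $\tau_B$, server~1 is free in both systems, so any $J_0$ goes to server~1 in both, and the systems coincide exactly at $\tau_A$.
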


\begin{proof}
This is the standard fastest-available-server interchange.  Suppose a policy assigns a customer $C$ to a slow server at an instant at which the fast server is idle.  Construct a modified policy that assigns $C$ to the fast server instead and then mimics the original policy, using dummy customers only to synchronize states after the earlier of the two coupled service completions.  Couple the first fast and slow service times by taking a single exponential random variable $E$ of rate one and setting
\begin{equation*}
        \sigma_1=E/\mu_1,\qquad \sigma_2=E/\mu_2 .
\end{equation*}
Since $\mu_1>\mu_2$, $\sigma_1\le \sigma_2$ almost surely.  The modified system therefore completes $C$ no later than the original system.  Up to the synchronization time it has no more real customers than the original system; after synchronization the two systems can be run with the same future arrivals, services, and decisions.  Dummy customers, if introduced, are not charged holding cost and are used only to preserve the same future service-clock construction.  Hence the modified policy has pathwise holding cost no larger than the original policy.  Repeating this interchange at every violation gives an optimal policy with the stated property.
\end{proof}
\end{chatgptbox}

\subsection{More Prompt Examples and Responses}
After the first prompt, the author's later prompts mainly asked GPT-5.5 Pro to perform several types of tasks: check boundary cases, provide more proof steps, make the arguments more rigorous, clarify some confusions from the authors, or revise the presentation.
These demonstrate typical interactions between the authors and the model: the authors checked the mathematical content, raised specific concerns, and asked the model to refine the proof and the exposition.
We include two examples below.

\subsubsection{Example 1}

In the first example, we asked the model to add more details about a step in the proof of Lemma~ 2.

\begin{promptbox}[title={Prompt that asked for more proof details}]
In the revised manuscript, can you add more details about the following: 
- When you verify monotonicity (K1) in the proof of Lemma 2, can you add a detailed explanation on why it suffices to show H_s(n+1) - H_r(n) >= 0 for each possible pair r <= s? Make sure you consider all the boundary cases for n. 
- Similarly, when you verify Convexity (K2) in the proof of Lemma 2, can you explain in detail why (3.7) is sufficient? 
Add these arguments to the manuscript without changing anything else.
\end{promptbox}

We show below how the model responded to the first bullet point in the above prompt.
\begin{chatgptbox}[title={Relevant part in GPT generated file \emph{before} the prompt above}]
For (K1), it suffices to show $H_s(n+1)-H_r(n)\ge0$ for each possible pair $r\le s$.
\end{chatgptbox}

\begin{chatgptbox}[title={Relevant part in GPT generated file \emph{after} the prompt above}]
For (K1), it suffices to show $H_s(n+1)-H_r(n)\ge0$ for each possible pair $r\le s$.

Here is the reduction in detail.  Fix a tail level $j\in\{0,1,2\}$ and an admissible total population $n\ge j$.  The (K1) inequality for $\bar H$ at this tail level is
\begin{equation*}
        \hspace{5.5cm}
        \bar H_j(n+1)-\bar H_j(n)\ge0 .
        \hspace{5.5cm}
        \text{(3.7a)}
\end{equation*}
By (3.4), if $r=r_j(n)$ and $s=r_j(n+1)$ are the selected indices at $n$ and $n+1$, then
\[
        \bar H_j(n)=H_r(n),\qquad \bar H_j(n+1)=H_s(n+1).
\]
Therefore (3.7a) is exactly
\[
        \hspace{5.5cm}
        H_s(n+1)-H_r(n)\ge0. 
        \hspace{5.5cm}
        \text{(3.7b)}
\]
Since each selector $r_j(\cdot)$ is nondecreasing in total population, every actual pair satisfies $r\le s$.  Conversely, every instance of (K1) for $\bar H$ is obtained in this way from one such adjacent selector pair.  Hence checking (3.7b) for all feasible nondecreasing pairs is sufficient.

The small-population boundary cases are also covered by this reduction.  For $j=0$ and $n=0$, the tail at $n=0$ contains only index $0$, so $r=0$ and $s\in\{0,1\}$; the only possible pairs are $(0,0)$ and $(0,1)$.  For $j=0$ and $n=1$, the first tail contains only indices $0,1$ and the second contains $0,1,2$, so the possible pairs are a subset of $(0,0),(0,1),(0,2),(1,1),(1,2)$.  For $j=0$ and $n\ge2$, all six nondecreasing pairs in the table below can occur.  For $j=1$ and $n=1$, the first tail contains only index $1$, so the only possible pairs are $(1,1)$ and $(1,2)$; for $j=1$ and $n\ge2$, the possible pairs are $(1,1),(1,2),(2,2)$.  For $j=2$, the tail always contains only index $2$, so only $(2,2)$ occurs.  Thus the table checks a superset of the boundary-feasible cases, and every displayed term is defined whenever the corresponding pair can actually arise.
\end{chatgptbox}

\subsubsection{Example 2}

In the second example, we asked the model to add more details for the proof of the average-cost optimality result. 
\begin{promptbox}
Now let's look at Section 7. Let's use the most updated version, ``2026_05_21_One_fast_two_slow_conjecture.pdf" in sources. Section 7 is a bit rough now. I asked an expert to review this section and got the feedback written in ``mz_comments.pdf" in the sources. You can also refer to the paper "Optimal control of a queueing system with two heterogeneous servers" by Lin and Kumar, also in the sources, to see how they handled such arguments. Please rewrite Section 7 to address all the issues raised. You probably want to prove properties of the cone \K. Put those properties in Section 2. Don't change other parts of the paper. Give me the pdf file and latex file.
\end{promptbox}
The feedback mentioned above was given by one of the authors, Matthew Zurek.  In this file, one paragraph is the following:
``The current Section 7 is basically a proof sketch rather than an acceptable proof. Many details need to be added. In particular, I think actual weights for the weighted sup-norm need to be specified/constructed and stability should actually be checked with a Lyapunov argument. Then one can either cite a theorem which helps with taking the limit as $\alpha\downarrow 0$ [justifying that the sequence of value function differences is bounded, has a subsequence with a limit, that limit satisfies the average-cost optimality equation, and also that this optimality equation has a stationary minimizing policy (which is obvious since the action space is finite)].''

Before this prompt, Section 7 was short, with about half a page, and mostly a proof sketch.
In response to this, the model expanded Section~7 to about 3 pages and generated what was closer to an actual proof.
This proof used the results from \citet{Sen_89}, and verified the assumptions.
The proof was roughly correct, but had redundant parts and was hard to read. It was later reorganized and polished by the authors.

\end{document}